\title{Relative Bertini type theorem for multiplier ideal sheaves}
\author{Osamu Fujino} 
\date{2018/2/18, version 0.26}
\subjclass[2010]{Primary 32L20; Secondary 32L10, 32Q15}
\keywords{Bertini type theorem, multiplier ideal sheaves, 
Nadel vanishing theorem, Koll\'ar vanishing 
theorem}
\address{Department of Mathematics, Graduate School of Science, 
Osaka University, Toyonaka, Osaka 560-0043, Japan}
\email{fujino@math.sci.osaka-u.ac.jp}
\DeclareMathOperator{\Supp}{Supp}
\newcommand{\Coker}[0]{\operatorname{Coker}}
\newtheorem{thm}{Theorem}[section]
\newtheorem{lem}[thm]{Lemma}
\newtheorem{cor}[thm]{Corollary}
\newtheorem*{claim}{Claim}
\newtheorem{question}[thm]{Question}
\theoremstyle{definition}
\newtheorem{ex}[thm]{Example}
\newtheorem{defn}[thm]{Definition}
\newtheorem{rem}[thm]{Remark}
\newtheorem*{ack}{Acknowledgments}       
\newtheorem{step}{Step}
\begin{document}

\maketitle 

\begin{abstract}
We establish a relative Bertini type theorem for multiplier 
ideal sheaves. Then we prove a 
relative version of the Koll\'ar--Nadel type vanishing theorem as 
an application. 
\end{abstract}

\tableofcontents

\section{Introduction}\label{f-sec1}

Let $X$ be a smooth complex projective variety and let $D$ be an effective 
$\mathbb Q$-divisor on $X$. 
Let $H$ be a general member of a very ample linear system 
$\Lambda$ on $X$. 
Then it is well known and is easy to see that 
the equality 
$$
\mathscr J(H, D|_H)=\mathscr J(X, D)|_H
$$ 
holds and that there exists the following short exact sequence 
$$
0\to \mathscr J(X, D)\otimes \mathscr O_X(-H)
\to \mathscr J(X, D)\to \mathscr J(H, D|_H)\to 0, 
$$
where $\mathscr J(X, D)$ (resp.~$\mathscr J(H, D|_H)$) is the 
multiplier ideal sheaf associated to $D$ (resp.~$D|_H$). Let $\varphi$ be a 
quasi-plurisubharmonic function on $X$. 
Then the inclusion 
$$
\mathscr J(\varphi|_H)\subset \mathscr J(\varphi)|_H
$$ 
follows from the Ohsawa--Takegoshi $L^2$ extension theorem. 
Note that $\mathscr J(\varphi)$ (resp.~$\mathscr J(\varphi|_H)$) is 
the multiplier ideal sheaf associated to $\varphi$ (resp.~$\varphi|_H$). 
However, the equality 
$$
\mathscr J(\varphi|_H)=\mathscr J(\varphi)|_H
$$ 
does not always hold. We think that 
the existence of a smooth member $H_0$ of $\Lambda$ such that 
the equality 
$$
\mathscr J(\varphi|_{H_0})=\mathscr J(\varphi)|_{H_0}
$$ 
holds and that there exists the following natural short exact sequence 
$$
0\to \mathscr J(\varphi)\otimes \mathscr O_X(-H_0)
\to \mathscr J(\varphi)\to \mathscr J(\varphi|_{H_0})\to 0
$$ 
is highly nontrivial. In \cite[Theorem 1.10]{fujino-matsumura}, 
we established that there are many members of $\Lambda$ satisfying 
the above good properties. 
The main purpose of this paper is to prove 
the following theorem. 

\begin{thm}[Relative Bertini type theorem for 
multiplier ideal sheaves]\label{f-thm1.1}
Let $f:X\to S$ be a proper surjective morphism 
from a complex manifold $X$ to a complex analytic space $S$. 
Let $\varphi$ be a quasi-plurisubharmonic function on $X$. 
We consider the following commutative diagram: 
$$
\xymatrix{
X\ar[drr]^-h\ar[dd]_-f\ar[dr]_-g &\\ 
&S\times \mathbb P^N \ar[dl]^-{p_1}\ar[r]_-{p_2}& \mathbb P^N\\
S&
}
$$
where $p_i$ is the $i$-th projection for $i=1, 2$. 
We consider the complete linear system 
$$
\Lambda:= |\mathscr O_{\mathbb P^N}(1)|
\simeq \mathbb P^N
$$ on $\mathbb P^N$. 
Let $S^\dag$ be any relatively compact open subset of $S$. 
We put $X^\dag:=f^{-1}(S^\dag)$, $h^\dag:=h|_{X^\dag}$, and consider 
$$
\mathcal G:=
\left\{H'\in \Lambda \, \left|
\begin{array}{l}
\mbox{$H^\dag:=(h^\dag)^*H'$ is well-defined and is a smooth 
divisor on $X^\dag$,}\\ 
\mbox{and 
$\mathscr J(\varphi|_{H^\dag})=\mathscr J (\varphi)|_{H^\dag}$ holds}
\end{array}
\right.
\right\}\subset \Lambda.
$$ 
We note that $H^\dag$ is well-defined if and only if the image of every irreducible 
component of $X^\dag$ by $h^\dag$ is not 
contained in $H'$. 
We also note that $\mathscr J(\varphi)$ {\em{(}}resp.~$\mathscr 
J(\varphi|_{H^\dag})${\em{)}} is the multiplier ideal 
sheaf on $X$ {\em{(}}resp.~$H^\dag${\em{)}} 
associated to $\varphi$ {\em{(}}resp.~$\varphi|_{H^\dag}${\em{)}}. 
Then $\mathcal G$ is dense in $\Lambda$ {\em{(}}$\simeq \mathbb P^N${\em{)}} 
in the classical topology. Furthermore, we put 
$$
\mathcal H:=\{ H'\in \mathcal G\, |\, \text{$H^\dag=(h^\dag)^*H'$ 
contains no associated 
primes of $\mathscr O_X/\mathscr J(\varphi)$ on $X^\dag$}\} \subset \mathcal G. 
$$ 
Then $\mathcal H$ is also dense in $\Lambda$ in the classical topology. 
More generally, $\mathcal G\setminus 
\mathcal S$ and $\mathcal H\setminus \mathcal S$ 
are dense in $\Lambda$ in the classical topology for any analytically meagre subset $\mathcal S$ of 
$\Lambda$.  
We note that there exists the following natural 
short exact sequence 
$$
0\to \mathscr J(\varphi|_{X^\dag})\otimes 
\mathscr O_{X^\dag}(-H^\dag)\to \mathscr J(\varphi|_{X^\dag})
\to \mathscr J(\varphi|_{H^\dag})\to 0
$$ 
for every $H'\in \mathcal H$, where $H^\dag=(h^\dag)^*H'$. 
\end{thm}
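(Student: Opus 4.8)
The plan is to reduce the whole statement to a local assertion over the compact set $\overline{X^\dag}$, and then to combine four ingredients: coherence of $\mathscr J(\varphi)$; a classical Bertini--Sard type argument for the conditions on $H^\dag$ other than the identity of multiplier ideals; the Ohsawa--Takegoshi extension theorem; and a Fubini/coarea slicing argument for the identity itself. Since $f$ is proper and $S^\dag$ is relatively compact, $\overline{X^\dag}$ is compact, so we may fix finitely many coordinate polydiscs $U_i$ ($1\le i\le m$) in $X$ together with nested relatively compact sub-polydiscs $U''_i\Subset U'_i\Subset U_i$ whose innermost pieces still cover $\overline{X^\dag}$, chosen so that on each $U_i$ the coherent ideal $\mathscr J(\varphi)$ is generated by finitely many $g_{i,1},\dots,g_{i,r_i}\in\mathscr O(U_i)$ (so in particular $\int_{U'_i}|g_{i,k}|^2e^{-2\varphi}\,dV<\infty$ for all $k$, by a finite covering), and so that $h$ maps $U_i$ into an affine chart $\{z_{l_i}\ne 0\}\subset\mathbb P^N$, on which $h|_{U_i}$ is given by holomorphic functions $t_j$ ($j\ne l_i$) and a hyperplane $H'=\{\textstyle\sum_j a_j z_j=0\}$ restricts to the zero divisor of $\ell_{i,H'}:=a_{l_i}+\sum_{j\ne l_i}a_j t_j\in\mathscr O(U_i)$. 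All conditions entering $\mathcal G$, $\mathcal H$ and the displayed short exact sequence are local on $X^\dag$, so it suffices to control them on the $U''_i$.

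First I would dispose of the conditions on $H^\dag$ other than the multiplier ideal identity. The divisor $H^\dag=(h^\dag)^*H'$ is ill-defined exactly when $h$ sends some irreducible component $Z$ of $X^\dag$ into $H'$; for each of the (locally, hence countably many) such $Z$ this forces $H'$ into the proper projective-linear subspace $\{H'\in\Lambda:h(Z)\subset H'\}\subsetneq\Lambda$, so the ill-defined locus is analytically meagre. Off this locus, the classical Bertini--Sard theorem applied to $h^\dag$ and the universal hyperplane shows that $\{H'\in\Lambda:H^\dag\text{ is singular somewhere on }\bigcup_i U''_i\}$ is analytically meagre. For the passage from $\mathcal G$ to $\mathcal H$ one further avoids the (finitely many on each $U''_i$) associated primes of $\mathscr O_X/\mathscr J(\varphi)$: each is the ideal of an irreducible analytic subset $W\subset X^\dag$, and $W\subset H^\dag$ again forces $H'$ into $\{H'\in\Lambda:h(W)\subset H'\}\subsetneq\Lambda$, whose union over all associated primes is analytically meagre. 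Finally, granting the identity $\mathscr J(\varphi)|_{H^\dag}=\mathscr J(\varphi|_{H^\dag})$ for $H'\in\mathcal G$: the restriction map $\mathscr J(\varphi|_{X^\dag})\to\mathscr O_{H^\dag}$ has image $\mathscr J(\varphi)|_{H^\dag}=\mathscr J(\varphi|_{H^\dag})$, while $\mathscr J(\varphi|_{X^\dag})\otimes\mathscr O_{X^\dag}(-H^\dag)\to\mathscr J(\varphi|_{X^\dag})$ is always injective; the image of the latter coincides with the kernel of the former precisely when a local equation of $H^\dag$ is a non-zerodivisor on $\mathscr O_X/\mathscr J(\varphi)$, i.e. when $H'\in\mathcal H$, and this yields the asserted short exact sequence.

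The heart of the matter is thus: for $H'$ outside an analytically meagre subset of $\Lambda$, $\mathscr J(\varphi)|_{H^\dag}=\mathscr J(\varphi|_{H^\dag})$. The inclusion $\mathscr J(\varphi|_{H^\dag})\subseteq\mathscr J(\varphi)|_{H^\dag}$ holds whenever $H^\dag$ is smooth, by the Ohsawa--Takegoshi $L^2$ extension theorem, exactly as recalled in the introduction. For the reverse inclusion, since $\mathscr J(\varphi)|_{H^\dag}$ is generated on $U''_i\cap H^\dag$ by the restrictions $g_{i,k}|_{H^\dag}$ while $\mathscr J(\varphi|_{H^\dag})$ is an ideal sheaf, it suffices to prove that for each $i,k$ and for $H'$ off an analytically meagre set the function $g_{i,k}|_{H^\dag}$ is locally square integrable with respect to $\varphi|_{H^\dag}$ on $H^\dag\cap U''_i$. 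Fixing $i,k$, writing $g=g_{i,k}$, fixing a direction $v=(v_j)_{j\ne l_i}\ne 0$ and setting $L_v:=\sum_{j\ne l_i}v_j t_j\in\mathscr O(U_i)$, the level sets $\{L_v=\zeta\}$ ($\zeta\in\mathbb C$) are, up to sign and the choice of parameter, the traces on $U_i$ of the hyperplanes with direction $v$, and the coarea formula gives
$$
\int_{\mathbb C}\Bigl(\int_{\{L_v=\zeta\}\cap U'_i}|g|^2e^{-2\varphi}\,dV\Bigr)\,d\lambda(\zeta)=\int_{U'_i}|g|^2e^{-2\varphi}\,|\partial L_v|^2\,dV\le\Bigl(\sup_{\overline{U'_i}}|\partial L_v|^2\Bigr)\int_{U'_i}|g|^2e^{-2\varphi}\,dV<\infty,
$$
so for almost every $\zeta$ the slice $\{L_v=\zeta\}\cap U'_i$ is smooth and the inner integral is finite; that slice equals $H^\dag\cap U'_i$ for the corresponding $H'$, with $\varphi|_{\{L_v=\zeta\}}=\varphi|_{H^\dag}$ and $g|_{\{L_v=\zeta\}}=g|_{H^\dag}$, whence the required local integrability on $H^\dag\cap U''_i$ for that $H'$. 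Letting $v$ vary, and ranging over the finitely many affine charts and indices $i$, gives the identity for all $H'$ outside an exceptional set. I expect the main obstacle to be the verification that this exceptional set is analytically meagre in the precise sense of the paper (and not merely of Lebesgue measure zero in each chart), together with the bookkeeping around the critical set of $L_v$; here I would adapt the argument of the absolute projective case \cite[Theorem 1.10]{fujino-matsumura}, carried out locally over $\overline{X^\dag}$.

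To conclude, $\Lambda\setminus\mathcal G$ and $\Lambda\setminus\mathcal H$ lie in finite unions of the analytically meagre sets produced above, hence are analytically meagre; since the analytically meagre subsets of $\Lambda\simeq\mathbb P^N$ form a $\sigma$-ideal each of whose members has dense complement, $\mathcal G$ and $\mathcal H$ are dense, and for any analytically meagre $\mathcal S$ the sets $\mathcal G\setminus\mathcal S$ and $\mathcal H\setminus\mathcal S$, being complements of analytically meagre subsets of $\Lambda$, are dense as well; the short exact sequence for $H'\in\mathcal H$ was obtained in the second step. Apart from the analytically-meagre control of the exceptional locus in the slicing step, every part of the argument is routine, using only compactness of $\overline{X^\dag}$ and coherence of $\mathscr J(\varphi)$.
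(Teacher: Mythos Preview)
Your concluding assertion that $\Lambda\setminus\mathcal G$ is analytically meagre is false in general: the paper itself says so explicitly in the introduction (``We note that $\Lambda\setminus\mathcal G$ is not always analytically meagre\ldots\ see \cite[Example~3.12]{fujino-matsumura}'') and again in Remark~\ref{f-rem4.7}. So the ``main obstacle'' you flag---upgrading the Fubini/coarea exceptional set from Lebesgue-null to analytically meagre---is not just an obstacle, it is an impossibility, and your proposed remedy of adapting \cite[Theorem~1.10]{fujino-matsumura} cannot work, since that is precisely the absolute case where the counterexample already lives.

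That said, your approach is salvageable and in fact yields more than the paper proves. Your coarea computation shows, for each fixed chart $U'_i$, each generator $g_{i,k}$, and each direction $v$, that the set of bad $\zeta$ has one-dimensional Lebesgue measure zero. To pass to a statement about almost every $H'\in\Lambda$ you should not try to vary $v$ informally but rather apply the coarea formula once to the holomorphic map
\[
\Phi:\;U'_i\times\mathbb C^{N-1}\longrightarrow \mathbb C^{N},\qquad (x,b)\longmapsto\bigl(b,\,L_{(1,b)}(x)\bigr),
\]
whose fibre over $(b,\zeta)$ is exactly $(H^\dag_{(b,\zeta)}\cap U'_i)\times\{b\}$; the coarea formula then gives joint measurability of $(b,\zeta)\mapsto\int_{\text{fibre}}|g_{i,k}|^2e^{-2\varphi}$ and finiteness of its integral over any compact in $\mathbb C^N$, hence the bad set has Lebesgue measure zero in each affine chart of $\Lambda$. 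Taking the finite union over $i,k$ and charts, $\Lambda\setminus\mathcal G$ has measure zero. Since every analytically meagre $\mathcal S\subset\Lambda$ also has measure zero, $(\Lambda\setminus\mathcal G)\cup\mathcal S$ is Lebesgue-null and $\mathcal G\setminus\mathcal S$ has full measure, hence is dense; likewise for $\mathcal H$. Your short exact sequence argument for $H'\in\mathcal H$ is correct as written.

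For comparison, the paper's proof is organized completely differently: it never attempts to control the measure of $\Lambda\setminus\mathcal G$ directly, but runs an induction on $N$. The base case $N=1$ is your Fubini argument (Step~\ref{f-step1}); for $N\ge 2$ the paper first manufactures a single member of $\mathcal H$ by an intricate argument with pencils and iterated hyperplane sections (Lemmas~\ref{f-lem4.1}, \ref{f-lem4.3}, \ref{f-lem4.5}, \ref{f-lem4.6}), and then bootstraps density of $\mathcal G\setminus\mathcal S$ via the line-projection Lemma~\ref{f-lem3.6}. This yields only density, not measure zero (as Remark~\ref{f-rem4.7} concedes). Your corrected route is shorter and gives the stronger conclusion that $\Lambda\setminus\mathcal G$ is Lebesgue-null---which incidentally answers the measure-zero part of Remark~\ref{f-rem4.7}, though not Question~\ref{f-question1.2} on pluripolarity.
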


In Theorem \ref{f-thm1.1}, we are mainly interested in 
the case where there exists $s_0\in S^\dag$ such that 
$\dim g(f^{-1}(s_0))>0$. 
We want to cut down $g(f^{-1}(s_0))$ by the 
linear system $\Lambda$ in some applications 
(see the proof of Theorem \ref{f-thm1.4} below). 
It may happen that $H^\dag=0$ for some  
$H\in \mathcal G$ when $\dim g(f^{-1}(s))=0$ holds 
for every $s\in S^\dag$. 

If $S=S_0$ is a point, then Theorem \ref{f-thm1.1} is 
essentially the same as \cite[Theorem 1.10]{fujino-matsumura} (see also 
\cite[Corollary 3.13]{fujino-matsumura}). 
Therefore, Theorem \ref{f-thm1.1} can be seen as a 
relative generalization of \cite[Theorem 1.10]{fujino-matsumura}. 
We note that $\Lambda\setminus \mathcal G$ is not always 
analytically meagre in the sense of Definition \ref{f-def3.1} 
(see, for example, \cite[Example 3.12]{fujino-matsumura}). 
The following question is due to S\'ebastien Boucksom in the 
case where $S=S_0$ is a point. 

\begin{question}\label{f-question1.2}
In Theorem \ref{f-thm1.1}, is 
$\Lambda\setminus \mathcal G$ a pluripolar 
subset of $\Lambda$ {\em{(}}$\simeq\mathbb P^N${\em{)}}?
\end{question}

We recommend the reader to 
see \cite[Chapter I.~(5.22) Definition]{demailly-book} 
for the definition of {\em{pluripolar subsets}}.   

Let us recall one of the main results of \cite{matsumura2}, 
which is a relative version of \cite[Theorem A]{fujino-matsumura}. 
Of course, the proof of Theorem \ref{f-thm1.3} in \cite{matsumura2} 
is much harder than that of \cite[Theorem A]{fujino-matsumura}. 
Theorem \ref{f-thm1.3} is a generalization of the Enoki injectivity theorem in 
\cite{enoki}, which is an analytic counterpart of the Koll\'ar injectivity 
theorem (see \cite{kollar}). 

\begin{thm}[{\cite[Theorem 1.3]{matsumura2}}]\label{f-thm1.3}
Let $\pi:X\to S$ be a proper surjective 
locally K\"ahler morphism from a complex manifold $X$ to 
a complex analytic space $S$. 
Let $F$ be a holomorphic line bundle on $X$ equipped with 
a singular hermitian metric $h$ and 
let $M$ be a holomorphic line bundle on $X$ 
with a smooth hermitian metric $h_M$. 
Assume that 
$$\sqrt{-1}\Theta_{h_M}(M)\geq 0 
\quad 
\text{and} \quad
\sqrt{-1}
\Theta_h(F)-\varepsilon \sqrt{-1}\Theta_{h_M}(M)\geq 0
$$ 
for some $\varepsilon>0$. 
Then, for any non-zero holomorphic 
section $s$ of $M$, the map 
$$
\times s: R^q\pi_*(\omega_X\otimes F\otimes \mathscr J(h))\to 
R^q\pi_*(\omega_X\otimes F\otimes \mathscr J(h)\otimes M)
$$ 
induced by the 
tensor product with $s$ 
is injective for every $q$, where $\omega_X$ is the canonical 
bundle of $X$ and $\mathscr J(h)$ is the multiplier ideal 
sheaf associated to the singular hermitian metric $h$. 
\end{thm}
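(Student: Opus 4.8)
The plan is to reduce Theorem \ref{f-thm1.3} to Enoki's harmonic-form argument, carried out on an $L^2$ Dolbeault complex over a Stein base, and to deal with the singular metric $h$ by Demailly's regularisation. First I would localise on $S$. Since $\pi$ is locally K\"ahler, $S$ is covered by Stein open subsets $V$ with $\pi^{-1}(V)$ carrying a K\"ahler form $\omega$, and injectivity of a morphism of coherent $\mathscr O_S$-modules is a local question on $S$ which, over a Stein $V$, may be tested on global sections: the kernel is coherent, hence generated by its global sections, so it vanishes as soon as the induced map on $H^0(V,-)$ is injective. Replacing $S$ by such a $V$, we may assume $S$ is Stein and $X=\pi^{-1}(S)$ is K\"ahler; note that $X$ is then in general non-compact.

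Next I would give these higher direct images an $L^2$ description admitting Hodge theory. Using a plurisubharmonic exhaustion of the Stein space $S$ pulled back to $X$ (and a further perturbation to work with a complete K\"ahler metric), together with Demailly's approximation \cite{demailly-book} of $h$ by smooth metrics $h_\delta\searrow h$ with curvature bounded below, $\sqrt{-1}\Theta_{h_\delta}(F)\geq\varepsilon\sqrt{-1}\Theta_{h_M}(M)-\delta\omega$, and with $\mathscr J(h_\delta)=\mathscr J(h)$, one identifies $H^0(S,R^q\pi_*(\omega_X\otimes F\otimes\mathscr J(h)))$ with the degree-$q$ cohomology of a complex of $L^2$, $F$-valued $(n,\bullet)$-forms on $X$ in which every class has a harmonic representative, i.e.\ one killed by $\bar\partial$ and by $\bar\partial^*$; likewise for $F\otimes M$ with the metric $h\otimes h_M$. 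This relative $L^2$-de Rham--Weil theory over a possibly singular Stein base, following Takegoshi and Matsumura, is the technical heart.

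Then I would run Enoki's argument. Let $[u]$ be a class annihilated by $\times s$; choose harmonic representatives $u_\delta$ for the metrics $h_\delta$, which are uniformly bounded in $L^2$, and pass to a weak limit $u$, a harmonic representative for $h$. Since $u_\delta$ is harmonic and $\sqrt{-1}\Theta_{h_\delta}(F)\geq\varepsilon\sqrt{-1}\Theta_{h_M}(M)-\delta\omega\geq-\delta\omega$, the Bochner--Kodaira--Nakano inequality gives $\langle[\sqrt{-1}\Theta_{h_\delta}(F),\Lambda_\omega]u_\delta,u_\delta\rangle\leq0$, and comparison with $\varepsilon\sqrt{-1}\Theta_{h_M}(M)\geq0$ then forces $\langle[\sqrt{-1}\Theta_{h_M}(M),\Lambda_\omega]u_\delta,u_\delta\rangle=O(\delta)$; in the limit, $u$ satisfies the kernel conditions $[\sqrt{-1}\Theta_h(F),\Lambda_\omega]u=0$ and $[\sqrt{-1}\Theta_{h_M}(M),\Lambda_\omega]u=0$. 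A computation due to Enoki, using these conditions and that $s$ is holomorphic, then gives $\bar\partial^*_{h\otimes h_M}(su)=0$; since also $\bar\partial(su)=0$, the form $su$ is harmonic for $h\otimes h_M$. On the other hand $su$ represents $s\cdot[u]=0$, so $su=\bar\partial v$ for some $L^2$ form $v$, whence $\|su\|^2=\langle\bar\partial v,su\rangle=\langle v,\bar\partial^*_{h\otimes h_M}(su)\rangle=0$. Therefore $su=0$ as an $L^2$ form, so $u$ vanishes off the proper analytic subset $\{s=0\}$, i.e.\ $u=0$ almost everywhere, and $[u]=0$. This proves injectivity on $H^0(V,-)$, hence Theorem \ref{f-thm1.3}.

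The step I expect to be the main obstacle is the second one: establishing the $L^2$-de Rham--Weil isomorphism for $R^q\pi_*(\omega_X\otimes F\otimes\mathscr J(h))$ over a possibly singular Stein base, producing harmonic representatives, while simultaneously handling the multiplier ideal through Demailly's equisingular approximation, the non-compactness of $X$ (completeness of the K\"ahler metric, closed range of $\bar\partial$), and the $L^2$ convergence of the harmonic forms $u_\delta$ as $\delta\to0$. This is precisely the machinery that makes the proof in \cite{matsumura2} substantially harder than the compact/absolute case treated in \cite{enoki} and \cite[Theorem A]{fujino-matsumura}.
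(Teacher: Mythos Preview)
The paper does not contain a proof of Theorem~\ref{f-thm1.3}: it is merely quoted from \cite[Theorem~1.3]{matsumura2} as one of the main results of that paper, and is used here as a black box in the proof of Theorem~\ref{f-thm1.4}. So there is nothing in this paper to compare your proposal against.

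That said, your sketch is a reasonable outline of the strategy one expects in \cite{matsumura2}: localise to a Stein base where the fibration is K\"ahler, represent the higher direct images by an $L^2$ Dolbeault complex with harmonic representatives, approximate the singular metric $h$ via Demailly's equisingular regularisation, and then run the Enoki--Bochner argument to show $su$ is harmonic and hence zero. You correctly identify the hard step as establishing the relative $L^2$--de Rham--Weil isomorphism and harmonic theory over a possibly singular, non-compact Stein base while controlling the limit $\delta\to 0$; this is indeed what the paper flags when it says the proof of Theorem~\ref{f-thm1.3} in \cite{matsumura2} ``is much harder than that of \cite[Theorem~A]{fujino-matsumura}.'' One point to be careful about: the assertion that the equisingular approximants $h_\delta$ satisfy $\mathscr J(h_\delta)=\mathscr J(h)$ exactly (rather than $\mathscr J(h_\delta)\supset\mathscr J(h)$ with equality in a suitable limit) is delicate and typically requires the strong openness theorem or a careful choice of approximation; in practice one works with the whole family and passes to the limit in the estimates rather than fixing a single $\delta$.
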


By using Theorems \ref{f-thm1.1} and \ref{f-thm1.3}, 
we prove a relative version of the Koll\'ar--Nadel type vanishing 
theorem (see \cite{fujino2}). 

\begin{thm}[Relative Koll\'ar--Nadel type vanishing theorem]\label{f-thm1.4}
Let $f:X\to Y$ be a proper surjective locally K\"ahler morphism 
from a complex manifold $X$ to a complex analytic 
space $Y$. 
Let $\pi:Y\to Z$ be a projective surjective  
morphism between complex analytic spaces. 
Let $F$ be a holomorphic line bundle on $X$ equipped with 
a singular hermitian metric $h$. 
Let $H$ be a $\pi$-ample holomorphic 
line bundle on $Y$. 
Assume that there exists a smooth hermitian metric $g$ on $f^*H$ such that 
$$
\sqrt{-1}\Theta_g(f^*H)\geq 0 \quad \text{and}\quad 
\sqrt{-1}\Theta_h(F)-\varepsilon 
\sqrt{-1}\Theta_g(f^*H)\geq 0
$$ 
for some $\varepsilon >0$. 
Then we have 
$$
R^i\pi_*R^jf_*(\omega_X\otimes F\otimes \mathscr J(h))=0
$$ 
for every $i>0$ and $j$, 
where $\omega_X$ is the canonical bundle of $X$ 
and $\mathscr J(h)$ is the multiplier ideal 
sheaf associated to the singular hermitian metric $h$. 
\end{thm}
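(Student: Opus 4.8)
The plan is to deduce Theorem~\ref{f-thm1.4} from the relative injectivity theorem (Theorem~\ref{f-thm1.3}) by Koll\'ar's method, using Theorem~\ref{f-thm1.1} to cut $X$ by a hyperplane section in good position with respect to $\mathscr J(h)$, and then inducting on $\dim X$. Write $\mathscr F:=\omega_X\otimes F\otimes \mathscr J(h)$ and $\mathscr G_q:=R^qf_*\mathscr F$; these are coherent by Nadel's theorem and Grauert's direct image theorem. Since the assertion is local on $Z$, after shrinking $Z$ to a relatively compact contractible Stein open subset and replacing $Y$ and $X$ by the preimages, it suffices to prove that $H^i(Y,\mathscr G_q)=0$ for all $i>0$ and all $q$: on such a $Z$ a coherent sheaf is zero as soon as its space of global sections is, one has $\Gamma(Z,R^i\pi_*\mathscr G_q)=H^i(Y,\mathscr G_q)$ by the Leray spectral sequence together with Cartan's Theorem~B, and every line bundle on $Z$ is trivial.

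Fix $m\gg 0$ so large that $H^m$ is relatively very ample over $Z$ --- giving a closed embedding $Y\hookrightarrow Z\times\mathbb P^N$ over $Z$ with $H^m\cong p_2^*\mathscr O_{\mathbb P^N}(1)|_Y$, which places us in the situation of Theorem~\ref{f-thm1.1} upon taking $S=Z$, taking the proper surjective morphism there to be $\pi\circ f:X\to Z$, and taking $h$ there to be the composite $X\xrightarrow{f}Y\hookrightarrow Z\times\mathbb P^N\xrightarrow{p_2}\mathbb P^N$ --- and so large that relative Serre vanishing holds: $R^i\pi_*(\mathscr G_q\otimes H^\ell)=0$ for all $i>0$, all $q$, and all $\ell\geq m$. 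Let $\varphi$ be a local weight of $h$, so that $\mathscr J(\varphi)=\mathscr J(h)$. By Theorem~\ref{f-thm1.1} and the density of $\mathcal H$ we may choose $H'\in \mathcal H$; set $H^\dag:=(h^\dag)^*H'$. Suppressing the routine passage to a slightly smaller relatively compact base needed in order to invoke Theorem~\ref{f-thm1.1}, the divisor $H^\dag$ is then smooth, satisfies $\mathscr J(\varphi|_{H^\dag})=\mathscr J(\varphi)|_{H^\dag}$, contains no associated prime of $\mathscr O_X/\mathscr J(\varphi)$, has $\mathscr O_X(H^\dag)\cong f^*H^m$, and Theorem~\ref{f-thm1.1} provides the short exact sequence $0\to \mathscr J(h)\otimes \mathscr O_X(-H^\dag)\to \mathscr J(h)\to \mathscr J(h|_{H^\dag})\to 0$.

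Tensoring this sequence with $\omega_X\otimes F\otimes f^*H^{k+m}$ for $k\geq 0$ and using adjunction, $(\omega_X\otimes\mathscr O_X(H^\dag))|_{H^\dag}\cong \omega_{H^\dag}$, one obtains for every $k\geq 0$ a short exact sequence on $X$
$$
0\to \mathscr F\otimes f^*H^{k}\xrightarrow{\ \times s\ } \mathscr F\otimes f^*H^{k+m}\to \iota_*\bigl(\mathscr F_{H^\dag}\otimes (f|_{H^\dag})^*H^{k}\bigr)\to 0 ,
$$
where $\iota:H^\dag\hookrightarrow X$, $\mathscr F_{H^\dag}:=\omega_{H^\dag}\otimes F|_{H^\dag}\otimes \mathscr J(h|_{H^\dag})$, and $s=f^*s_D$ is the pullback of a non-zero section $s_D\in H^0(Y,H^m)$ cutting out $D:=f(H^\dag)\subset Y$. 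The line bundle $M:=f^*H^m$ carries the smooth semi-positive metric $g^{\otimes m}$, one has $\mathscr J(h\otimes g^{\otimes k})=\mathscr J(h)$, and
$$
\sqrt{-1}\Theta_{h\otimes g^{\otimes k}}(F\otimes f^*H^{k})-\tfrac{\varepsilon}{m}\sqrt{-1}\Theta_{g^{\otimes m}}(M)=\bigl(\sqrt{-1}\Theta_h(F)-\varepsilon\sqrt{-1}\Theta_g(f^*H)\bigr)+k\,\sqrt{-1}\Theta_g(f^*H)\geq 0 ;
$$
hence Theorem~\ref{f-thm1.3}, applied to $f:X\to Y$ with $F$ replaced by $F\otimes f^*H^{k}$ and with $M$ and the section $s$, shows that $\times s:R^qf_*(\mathscr F\otimes f^*H^{k})\to R^qf_*(\mathscr F\otimes f^*H^{k+m})$ is injective for every $q$. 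Therefore the long exact sequence of $R^\bullet f_*$ attached to the displayed sequence breaks into short exact sequences, which by the projection formula read, for all $q$ and all $k\geq 0$,
$$
0\to \mathscr G_q\otimes H^{k}\xrightarrow{\ \times s_D\ } \mathscr G_q\otimes H^{k+m}\to \mathscr G'_q\otimes H^{k}\to 0 ,
$$
where $\mathscr G'_q:=R^q(f|_{H^\dag})_*\mathscr F_{H^\dag}$; denote this sequence by $(\star_k)$.

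I would conclude by induction on $\dim X$. When $\dim X=0$ --- or, more generally, once $X$ has been cut down enough that the restriction of $\pi$ to the relevant base is finite --- the statement is trivial, as a finite morphism has no higher direct images. For the inductive step, Theorem~\ref{f-thm1.4} applied to $f|_{H^\dag}:H^\dag\to f(H^\dag)$ and to $\pi|_{f(H^\dag)}:f(H^\dag)\to Z$, with the line bundle $F|_{H^\dag}\otimes (f|_{H^\dag})^*H^{k}$ (a legitimate instance, since $f|_{H^\dag}$ is again proper surjective locally K\"ahler because $H^\dag$ is a smooth divisor in $X$, since $H|_{f(H^\dag)}$ is $\pi$-ample, and since the curvature hypothesis persists after restriction), yields $R^i\pi_*(\mathscr G'_q\otimes H^{k})=0$ for all $i>0$, all $q$, and all $k\geq 0$. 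Feeding this into the long exact sequence of $R^\bullet\pi_*$ applied to $(\star_k)$, one gets for $i\geq 2$ that $R^i\pi_*(\mathscr G_q\otimes H^{k})\cong R^i\pi_*(\mathscr G_q\otimes H^{k+m})$ for every $k\geq 0$; iterating and invoking relative Serre vanishing forces $R^i\pi_*(\mathscr G_q\otimes H^{k})=0$ for $i\geq 2$, in particular $R^i\pi_*\mathscr G_q=0$. The main obstacle is the case $i=1$: there $(\star_k)$ only gives that $\times s_D:R^1\pi_*(\mathscr G_q\otimes H^{k})\to R^1\pi_*(\mathscr G_q\otimes H^{k+m})$ is surjective, with kernel $\Coker\bigl(\pi_*(\mathscr G_q\otimes H^{k+m})\to \pi_*(\mathscr G'_q\otimes H^{k})\bigr)$, which does not vanish for purely formal reasons. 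To close the argument I would instead upgrade the sheaf-level injectivity of Theorem~\ref{f-thm1.3} to an injectivity $R^i\pi_*\mathscr G_q\hookrightarrow R^i\pi_*(\mathscr G_q\otimes H^m)$ valid for all $i$ --- after which relative Serre vanishing disposes of all $i>0$ at once, and the induction becomes unnecessary --- by applying Theorem~\ref{f-thm1.3} to the composite $\pi\circ f:X\to Z$ with $M=f^*H^m$ to obtain injectivity of $\times(f^*s_D)$ on $R^k(\pi\circ f)_*\mathscr F$, and then transferring this injectivity to the graded pieces $R^i\pi_*R^jf_*\mathscr F$ of the Leray filtration. This last step relies on the $E_2$-degeneration of the Leray spectral sequence of $\pi\circ f$ for $\omega_X\otimes F\otimes \mathscr J(h)$ (equivalently, on a splitting $Rf_*\mathscr F\cong \bigoplus_j (R^jf_*\mathscr F)[-j]$ in the derived category) and on the strictness of $\times(f^*s_D)$ with respect to the Leray filtrations; securing these relative analogues of Koll\'ar's decomposition and strictness theorems in the present singular-metric, locally K\"ahler setting --- in the spirit of \cite{matsumura2} and \cite{fujino2} --- is the technical heart of the proof.
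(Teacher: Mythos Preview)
Your setup is essentially right, and you have assembled exactly the ingredients the paper uses: Theorem~\ref{f-thm1.1} to produce a smooth hyperplane section with the good restriction sequence for $\mathscr J(h)$, relative Serre vanishing for a high power of $H$, and Theorem~\ref{f-thm1.3} applied both to $f$ and to $\pi\circ f$. You also correctly obtain $R^i\pi_*\mathscr G_q=0$ for $i\geq 2$. The gap is in your treatment of $i=1$: you propose to deduce the injectivity $R^1\pi_*\mathscr G_q\hookrightarrow R^1\pi_*(\mathscr G_q\otimes H^m)$ from $E_2$-degeneration of the Leray spectral sequence of $\pi\circ f$ together with strictness of $\times s$ for the Leray filtration, and you identify securing these ``relative Koll\'ar decomposition'' statements as the technical heart. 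In the present singular-metric, locally K\"ahler setting these are not available a priori, and proving them would be substantially harder than Theorem~\ref{f-thm1.4} itself; invoking them is circular in spirit.

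The paper closes the $i=1$ case by a much lighter spectral-sequence trick that uses only what you already have. Once you know $E_2^{i,j}=R^i\pi_*R^jf_*\mathscr F=0$ for $i\geq 2$, elementary spectral-sequence bookkeeping (no degeneration hypothesis) gives $E_2^{1,j}=E_\infty^{1,j}$, and this is the bottom nonzero step of the Leray filtration on $R^{1+j}(\pi\circ f)_*\mathscr F$; hence the edge map
\[
\alpha:\ E_2^{1,j}\ \hookrightarrow\ R^{1+j}(\pi\circ f)_*\mathscr F
\]
is injective. Now apply Theorem~\ref{f-thm1.3} to $\pi\circ f$ (which is proper surjective locally K\"ahler) with $M=f^*H^m$: the map $\varphi^{1+j}:R^{1+j}(\pi\circ f)_*\mathscr F\to R^{1+j}(\pi\circ f)_*(\mathscr F\otimes\mathscr O_X(A))$ is injective. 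By \emph{functoriality} of the Leray filtration (not degeneration), $\varphi^{1+j}\circ\alpha$ lands in the corresponding filtration step $\overline F{}^{1}$ of the target, and this factors through $\overline E_2^{1,j}=R^1\pi_*(\mathscr G_j\otimes H^m)$, which vanishes by your Serre step. Therefore $E_2^{1,j}=0$. This is exactly the argument in the paper; no decomposition theorem and no strictness are needed. (Incidentally, the paper inducts on $\dim\pi^{-1}(z)$ rather than on $\dim X$, and obtains the splitting of $Rf_*$ of the restriction sequence from the associated-prime condition on $A_Y$ rather than from Theorem~\ref{f-thm1.3} applied to $f$; these variations are harmless.)
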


As an application of Theorem \ref{f-thm1.4} and 
the strong openness in \cite{guan-zhou}, we have: 

\begin{cor}\label{f-cor1.5}
Let $f:X\to Y$ be a proper surjective locally K\"ahler morphism 
from a complex manifold $X$ to a complex analytic 
space $Y$. 
Let $\pi:Y\to Z$ be a locally projective surjective  
morphism between complex analytic spaces. 
Let $F$ be a holomorphic line bundle on $X$ equipped with 
a singular hermitian metric $h$ such that $\sqrt{-1}\Theta_h(F)\geq 0$. 
Let $M$ be a $\pi$-nef and $\pi$-big holomorphic 
line bundle on $Y$. Then we have 
$$
R^i\pi_*(M\otimes R^jf_*(\omega_X\otimes F\otimes \mathscr J(h)))=0
$$ 
for every $i>0$ and $j$, 
where $\omega_X$ is the canonical bundle of $X$ 
and $\mathscr J(h)$ is the multiplier ideal 
sheaf associated to the singular hermitian metric $h$. 
\end{cor}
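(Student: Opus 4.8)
The plan is to deduce Corollary~\ref{f-cor1.5} from Theorem~\ref{f-thm1.4} by absorbing $M$ into the coefficient line bundle via the projection formula, after replacing $M$ by a singular metric whose curvature is strictly positive but whose singular part is as small as we wish, and then using the strong openness of \cite{guan-zhou} to recover $\mathscr J(h)$. First I would reduce to the case where $\pi\colon Y\to Z$ is projective and $Z$ is a relatively compact Stein open subset: the vanishing $R^i\pi_*(\,\cdot\,)=0$ is local on $Z$ and compatible with restriction to opens, $\pi$ (being locally projective, hence proper) makes $(\pi\circ f)^{-1}(Z)$ relatively compact in $X$, and $M$ remains $\pi$-nef and $\pi$-big after such a restriction. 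Over such a $Z$, fix a $\pi$-ample line bundle $H$ on $Y$, a positive smooth $(1,1)$-form $\omega_Y$, and a smooth metric $g_H$ on $H$ with $0\le\sqrt{-1}\Theta_{g_H}(H)\le C\omega_Y$ for some $C>0$. Since $M$ is $\pi$-big there is a singular metric $g_{\mathrm{big}}$ on $M$ with $\sqrt{-1}\Theta_{g_{\mathrm{big}}}(M)\ge\delta\omega_Y$ for some $\delta>0$, and since $M$ is $\pi$-nef there are smooth metrics $g_\eta$ on $M$ with $\sqrt{-1}\Theta_{g_\eta}(M)\ge-\eta\omega_Y$ for every $\eta>0$; for $t\in(0,1)$ put $g_{M,t}:=g_{\mathrm{big}}^{\,t}\cdot g_{\eta(t)}^{\,1-t}$ with $\eta(t)$ chosen so small that $\sqrt{-1}\Theta_{g_{M,t}}(M)\ge\frac{t\delta}{2}\omega_Y>0$, noting that the singular part of $g_{M,t}$ is $t$ times that of $g_{\mathrm{big}}$ and hence shrinks as $t\searrow 0$.

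Next I would apply Theorem~\ref{f-thm1.4} to $f\colon X\to Y$ and $\pi\colon Y\to Z$, with $F$ replaced by $F':=F\otimes f^*M$ equipped with the singular metric $h':=h\cdot f^*g_{M,t}$, and with the $\pi$-ample line bundle $H$ carrying the smooth metric $g:=f^*g_H$ on $f^*H$. Indeed $\sqrt{-1}\Theta_{h'}(F')=\sqrt{-1}\Theta_h(F)+f^*\sqrt{-1}\Theta_{g_{M,t}}(M)\ge\frac{t\delta}{2}f^*\omega_Y\ge 0$, while $\sqrt{-1}\Theta_g(f^*H)=f^*\sqrt{-1}\Theta_{g_H}(H)\ge 0$, and with $\varepsilon:=\frac{t\delta}{2C}$ one has
$$
\sqrt{-1}\Theta_{h'}(F')-\varepsilon\sqrt{-1}\Theta_g(f^*H)\ \ge\ \frac{t\delta}{2}f^*\omega_Y-\frac{t\delta}{2C}\,Cf^*\omega_Y\ =\ 0 .
$$
Hence Theorem~\ref{f-thm1.4} yields $R^i\pi_*R^jf_*(\omega_X\otimes F'\otimes\mathscr J(h'))=0$ for every $i>0$ and $j$, where $\mathscr J(h')=\mathscr J(h\cdot f^*g_{M,t})$.

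It then remains to identify $\mathscr J(h\cdot f^*g_{M,t})$ with $\mathscr J(h)$ for $t$ small. The local weight of $h\cdot f^*g_{M,t}$ is $\varphi_h+t\,f^*\psi+(\text{smooth})$, where $\varphi_h$ and $\psi\le 0$ are the local weights of $h$ and $g_{\mathrm{big}}$; since $\psi\le 0$, the coherent ideals $\mathscr J(h\cdot f^*g_{M,t})$ increase as $t\searrow 0$ and are contained in $\mathscr J(h)$. For the reverse inclusion I would invoke the strong openness of \cite{guan-zhou}: given a local section $u$ of $\mathscr J(h)$, strong openness provides $p>1$ with $|u|^2e^{-2p\varphi_h}$ locally integrable, and then Hölder's inequality with exponents $p,\,p/(p-1)$ together with Skoda's integrability criterion — applied to the quasi-plurisubharmonic function $f^*\psi$, which is locally bounded above, and using that $u$ is bounded — shows that $|u|^2e^{-2\varphi_h-2t\,f^*\psi}$ is locally integrable once $t$ is small, i.e.\ $u$ is a section of $\mathscr J(h\cdot f^*g_{M,t})$. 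By coherence of these ideals and compactness of $(\pi\circ f)^{-1}(Z)$ this produces a uniform $t_0>0$ with $\mathscr J(h\cdot f^*g_{M,t})=\mathscr J(h)$ throughout for $0<t\le t_0$. For such $t$ we obtain $\omega_X\otimes F'\otimes\mathscr J(h')=\omega_X\otimes F\otimes f^*M\otimes\mathscr J(h)$, and the projection formula (as $M$ is locally free) gives $R^jf_*(\omega_X\otimes F'\otimes\mathscr J(h'))=M\otimes R^jf_*(\omega_X\otimes F\otimes\mathscr J(h))$; combining this with the vanishing above and letting $Z$ range over a cover of the base as in the reduction step, we conclude $R^i\pi_*(M\otimes R^jf_*(\omega_X\otimes F\otimes\mathscr J(h)))=0$ for all $i>0$ and $j$. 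The main obstacle is exactly this last identification $\mathscr J(h\cdot f^*g_{M,t})=\mathscr J(h)$: controlling the multiplier ideal of the composite metric under the small perturbation $t\,f^*\psi$ of the weight is where strong openness is indispensable, and one must also make sure that the threshold $t_0$ can be taken uniformly over the relatively compact region in play.
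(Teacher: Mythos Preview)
Your proposal is correct and follows essentially the same strategy as the paper: reduce to a relatively compact Stein base with $\pi$ projective, absorb $M$ into the coefficient bundle via the projection formula, manufacture a singular metric on $f^*M$ whose curvature dominates a small multiple of $\sqrt{-1}\Theta_g(f^*H)$, invoke Theorem~\ref{f-thm1.4}, and use strong openness to recover $\mathscr J(h)$.

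The only real difference is in how the metric on $M$ is produced. The paper works algebraically: it applies Kodaira's lemma to write $M^{\otimes m}\simeq H\otimes\mathscr O_Y(E)$ with $E$ effective, then uses that $M^{\otimes k}\otimes H$ is $\pi$-ample (so carries a smooth semipositive metric after shrinking $Z$) and puts $h'=(g_1\cdot g\cdot g_3^2)^{1/(2m+k)}$ on $f^*M$, letting $k\to\infty$ play the role of your $t\to 0$. You instead invoke the analytic characterizations of $\pi$-big and $\pi$-nef directly and interpolate. These are equivalent: your $g_{\mathrm{big}}$ is exactly what Kodaira's lemma produces, and your $g_\eta$ comes from the $\pi$-ampleness of $M^{\otimes k}\otimes H$. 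The paper's version has the advantage that the singular part of $h'$ is visibly $\frac{2}{2m+k}\log|s|_{g_2}$ for a holomorphic section $s$, so Lemma~\ref{f-lem6.1} applies cleanly; your formulation is more flexible but you should take a little care that $f^*\psi$ is genuinely quasi-plurisubharmonic on $X$ (i.e.\ not identically $-\infty$ on any component), which follows from surjectivity of $f$.

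One minor point of presentation: you place $\omega_Y$, $g_H$, $g_{\mathrm{big}}$, $g_\eta$ on $Y$, which is only an analytic space and may be singular. The paper avoids this by constructing all metrics directly on $X$ (on $f^*H$, $f^*M$, $\mathscr O_X(f^*E)$). Your argument goes through unchanged once you pull everything back to $X$ from the start, or interpret these objects via the embedding $Y\hookrightarrow Z\times\mathbb P^N$; but it is worth saying so explicitly.
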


For related vanishing theorems, see \cite{fujino1}, 
\cite{fujino2}, \cite{kollar}, \cite{matsumura1}, \cite{ohsawa}, \cite{takegoshi}, 
and so on. We recommend the reader to see 
\cite[Chapters 5 and 6]{fujino3}, where 
we discuss various Koll\'ar type vanishing theorems 
by using the theory of mixed Hodge structures 
on cohomology with compact support and 
explain their applications to the minimal model 
program for higher-dimensional complex algebraic varieties. 

We close this introduction with a remark on Nakano semipositive 
vector bundles. 

\begin{rem}[Twists by Nakano semipositive vector bundles]\label{f-rem1.6} 
Let $E$ be a Nakano semipositive holomorphic vector 
bundle on $X$. 
Then it is not difficult to see that 
Theorems \ref{f-thm1.3}, \ref{f-thm1.4}, and Corollary \ref{f-cor1.5} 
hold even when $\omega_X$ is replaced by $\omega_X\otimes E$. 
We leave the details as an exercise for the reader 
(see \cite[Section 6]{fujino-matsumura}). 
\end{rem}
\begin{ack}
The author was partially supported 
by JSPS KAKENHI Grant Numbers JP16H03925, JP16H06337. 
He would like to thank Shin-ichi Matsumura 
very much for some useful comments. 
He also would like to thank 
Professor S\'ebastien Boucksom for comments on \cite{fujino-matsumura}.
\end{ack}

\section{Preliminaries}\label{f-sec2}

For the basic results of the theory of complex analytic spaces, 
see \cite{banica-s} and \cite{fischer}. 
For various analytic methods used in this paper, see \cite{demailly}. 

\begin{defn}[Singular hermitian metrics and curvatures]\label{f-def2.1} 
Let $F$ be a holomorphic line bundle
on a complex manifold $X$. 
A {\em{singular hermitian metric}}
on $F$ is a metric $h$ which is given in 
every trivialization $\theta: F|_U\simeq U\times 
\mathbb C$ 
by 
\begin{equation*}
| \xi |_{h} =|\theta(\xi)|e^{-\varphi} \text{ on } U, 
\end{equation*}
where $\xi$ is a section of $F$ on $U$ and 
$\varphi \in L^1_{\mathrm{loc}}(U)$ 
is an arbitrary function. 
Here $L^1_{\mathrm{loc}}(U)$ is the space
of locally integrable functions on $U$. 
We usually call $\varphi$ 
the weight function of the
metric with respect to the trivialization $\theta$. 
The curvature of a singular hermitian metric $h$ is 
defined by 
\begin{equation*}
\Theta_{h}(F):=2\partial \overline\partial \varphi, 
\end{equation*}
where $\varphi$ is a weight function and 
$\partial \overline\partial\varphi$ is taken in the sense of currents. 
It is easy to see that the right hand side does not depend on 
the choice of trivializations. 
Therefore, we get a global closed $(1, 1)$-current $\Theta_h(F)$ on $X$. 
\end{defn}

\begin{defn}[(Quasi-)plurisubharmonic functions and multiplier ideal 
sheaves]\label{f-def2.2}
A function $\varphi:U\to [-\infty, \infty)$ defined on an 
open set $U\subset \mathbb C^n$ is called 
{\em{plurisubharmonic}} if 
\begin{itemize}
\item[(i)] $\varphi$ is upper semicontinuous, and 
\item[(ii)] for every complex line $L\subset \mathbb C^n$, 
$\varphi|_{U\cap L}$ is subharmonic on $U\cap L$, 
that is, for every $a\in U$ and $\xi \in \mathbb C^n$ satisfying 
$|\xi|<d(a, U^c)=\inf \{|a-x| \, | \, x\in U^c\}$, the function 
$\varphi$ satisfies the mean inequality 
$$
\varphi(a)\leq \frac{1}{2\pi}\int ^{2\pi}_0 \varphi(a+e^{i\theta}
\xi)d\theta. 
$$ 
\end{itemize}

Let $X$ be an $n$-dimensional complex manifold. 
A function $\varphi:X\to [-\infty, \infty)$ is said to 
be {\em{plurisubharmonic}} 
if there exists an open cover $X=\bigcup _{i\in I}U_i$ such that 
$\varphi|_{U_i}$ is plurisubharmonic on $U_i$ ($\subset \mathbb C^n$) for every 
$i$. 
A {\em{quasi-plurisubharmonic}} function is a  function $\varphi$ which 
is locally equal to the sum of a plurisubharmonic function and of a smooth 
function. 

Let $\varphi$ be a quasi-plurisubharmonic function on a complex 
manifold $X$. 
Then the {\em{multiplier ideal sheaf}} 
$\mathscr J(\varphi)\subset \mathscr O_X$ is defined by 
$$
\Gamma (U, \mathscr J (\varphi))=\{ f\in 
\mathscr O_X(U) \, |\,  |f|^2e^{-2\varphi}\in L^1_{\mathrm{loc}}(U)\}
$$ 
for every open set $U\subset X$. 
It is well known that $\mathscr J(\varphi)$ is a coherent ideal sheaf on $X$. 

Let $S$ be a complex submanifold 
of $X$. Then the restriction $\mathscr J(\varphi)|_S$ 
of the multiplier ideal sheaf $\mathscr J(\varphi)$ to $S$ is 
defined by the image of $\mathscr J(\varphi)$ under 
the natural surjective 
morphism $\mathscr O_X\to \mathscr O_S$, 
that is, 
\begin{equation*}
\mathscr J(\varphi)|_S=\mathscr J(\varphi) / \mathscr J(\varphi)
\cap\mathcal I_S, 
\end{equation*} 
where $\mathcal I_S$ is the defining ideal sheaf of 
$S$ on $X$. 
We note that the restriction $\mathscr J(\varphi)|_S$ 
does not always coincide with $\mathscr J(\varphi)\otimes 
\mathscr O_S=\mathscr J(\varphi)/\mathscr J(\varphi)\, \mathcal I_S$. 
\end{defn}

\begin{defn}[Multiplier ideal sheaves associated to singular hermitian 
metrics]\label{f-def2.3}
Let $F$ be a holomorphic line bundle on a complex manifold $X$ and 
let $h$ be a singular hermitian metric on $F$.  
We assume $\sqrt{-1}\Theta_h(F)\geq \gamma$ 
for some smooth $(1, 1)$-form $\gamma$ on $X$.
We fix a smooth hermitian metric $h_{\infty}$ on $F$. 
Then we can write $h=h_{\infty} e^{-2\psi}$ for some 
$\psi \in L^1_{\mathrm{loc}}(X)$.  
Then 
$\psi$ coincides with a quasi-plurisubharmonic 
function $\varphi$ on $X$ almost everywhere. 
In this situation, we put $\mathscr J(h):=\mathscr J(\varphi)$. 
We note that $\mathscr J(h)$ is independent of 
$h_{\infty}$ and is well-defined. 
\end{defn}

\section{Bertini type theorem revisited}\label{f-sec3}

In this section, we will reformulate some results in \cite{manaresi} 
for our purposes. Let us recall the definition of 
{\em{analytically meagre}} subsets. 

\begin{defn}\label{f-def3.1}
A subset $\mathcal S$ of a complex analytic space $X$ 
is said to be {\em{analytically meagre}} 
if $$\mathcal S\subset \bigcup _{n\in \mathbb N} Y_n$$ where 
each $Y_n$ is a locally closed analytic 
subset of $X$ of codimension $\geq 1$. 
\end{defn}

The following result is a slight reformulation of 
\cite[(II.5) Theorem and (II.7) Corollary]{manaresi}. We need it for the proof of 
Theorem \ref{f-thm1.1} in Section \ref{f-sec4}. 

\begin{thm}[Bertini type theorem for complex manifolds]\label{f-thm3.2}
Let $M$ be a complex manifold 
which has a countable base of open subsets 
and let $\mathscr L$ be a holomorphic line 
bundle on $M$. 
Assume that $M$ has only finitely many connected components. 
Let $t_l$ be an element of 
$H^0(M, \mathscr L)$ for 
every $1\leq l\leq N+1$ such that $\{t_1, \ldots, t_{N+1}\}$ 
generates $\mathscr L$, 
that is, $W\otimes _{\mathbb C} \mathscr O_M\to \mathscr L$ is surjective, 
where $W$ is the linear subspace of $H^0(M, \mathscr L)$ spanned 
by $\{t_1, \ldots, t_{N+1}\}$. 
We consider an $(N+1)$-dimensional vector space 
$V=\bigoplus _{l=1}^{N+1} \mathbb C t_l$. 
Then there exists a dense subset $\mathscr D$ of 
$\Lambda =(V-\{0\})/{\mathbb C^{\times}}(\simeq \mathbb P^{N})$ such that 
$\Lambda \setminus \mathscr D$ is analytically meagre and that 
for each element of $\mathscr D$ the corresponding divisor 
on $M$ is smooth. 
\end{thm}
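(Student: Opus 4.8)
The plan is to reduce Theorem~\ref{f-thm3.2} to the already-available Bertini type theorem of Manaresi, \cite[(II.5) Theorem and (II.7) Corollary]{manaresi}, by packaging the sections $t_1,\dots,t_{N+1}$ into a single morphism to projective space and invoking that result on the induced linear system. More precisely, since $\{t_1,\dots,t_{N+1}\}$ generates $\mathscr L$, the evaluation map $W\otimes_{\mathbb C}\mathscr O_M\to\mathscr L$ is surjective and hence defines a holomorphic morphism $\Phi\colon M\to\mathbb P^N$ together with an isomorphism $\Phi^*\mathscr O_{\mathbb P^N}(1)\simeq\mathscr L$ under which the hyperplane sections $x_l$ pull back to $t_l$. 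The linear system $\Lambda=(V-\{0\})/\mathbb C^{\times}$ is then identified with the complete linear system $|\mathscr O_{\mathbb P^N}(1)|\simeq\mathbb P^N$, and for $H'\in\Lambda$ the associated divisor on $M$ is exactly $\Phi^*H'$, which makes sense because no connected component of $M$ maps into a hyperplane (the $t_l$ generate $\mathscr L$, so $\Phi$ is everywhere defined and nondegenerate on each component).

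The next step is to apply Manaresi's theorem to the morphism $\Phi$. That theorem (in the form we need) asserts the existence of a subset $\mathscr D\subset\Lambda$ with $\Lambda\setminus\mathscr D$ analytically meagre such that for $H'\in\mathscr D$ the scheme-theoretic pullback $\Phi^*H'$ is smooth; the hypotheses there are precisely that $M$ be a complex manifold with a countable base and only finitely many connected components, which is exactly what we have assumed. The only genuine bookkeeping is to check that the notion of ``analytically meagre'' in Definition~\ref{f-def3.1} coincides with the exceptional-set description in \cite{manaresi} — i.e.\ that the bad locus is contained in a countable union of locally closed analytic subsets of codimension $\geq 1$ of $\Lambda\simeq\mathbb P^N$ — and that the smoothness of $\Phi^*H'$ as a divisor on $M$ is what Manaresi's conclusion provides. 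I would also remark that denseness of $\mathscr D$ in the classical topology is automatic: the complement, being contained in a countable union of proper analytic subsets of the connected manifold $\mathbb P^N$, has empty interior by Baire's theorem.

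The main obstacle, such as it is, is not mathematical depth but rather the translation of hypotheses and conclusions between the two formulations. Manaresi's setup is phrased in terms of a morphism to projective space (or a subvariety thereof) and an ambient linear system, whereas Theorem~\ref{f-thm3.2} is phrased intrinsically in terms of a generating family of sections of a line bundle; the substance of the ``proof'' is the observation that these are the same data. One point that deserves a careful word is that $\Phi$ need not be an embedding and need not be proper, so $\Phi^*H'$ is defined via the ideal sheaf generated by the pullback of a defining equation of $H'$, and one must confirm that Manaresi's result is stated at this level of generality (it is — the statement is about smoothness of the preimage, not about $\Phi$ being a closed immersion). Once these identifications are in place, the theorem follows immediately; no further estimates or constructions are required.
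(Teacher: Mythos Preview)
Your overall strategy is sound and close to the paper's own: both recognize that Theorem~\ref{f-thm3.2} is essentially Manaresi's Bertini theorem, and the content of the proof is the translation of hypotheses. The paper is more explicit than you are --- rather than citing \cite[(II.5) and (II.7)]{manaresi} as a black box, it reruns the key ingredients (a Baire-category density argument over a compact exhaustion of $M$ in Step~1, and an incidence-variety construction showing the bad locus is a countable union of locally closed analytic sets in Step~2), invoking the individual lemmas of \cite{manaresi} at each stage. Your packaging via a morphism $\Phi\colon M\to\mathbb P^N$ is a perfectly good way to organize the same reduction; the paper instead works directly with the sections on local trivializations, but this is a matter of presentation rather than substance.

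One genuine slip to fix: your claim that ``no connected component of $M$ maps into a hyperplane'' because $\{t_l\}$ generates $\mathscr L$ is false. Generation means that at each point some $t_l$ is nonzero, which guarantees $\Phi$ is a morphism, but it does \emph{not} prevent a particular linear combination $\sum a_l t_l$ from vanishing identically on one component of $M$. (Take $M$ to be two disjoint copies of $\mathbb C$ with $\mathscr L$ trivial, $t_1\equiv 1$ on both, and $t_2=z$ on the first copy, $t_2\equiv 0$ on the second.) The paper deals with this explicitly in its Step~2: since $M$ has only finitely many components, the sections vanishing on at least one of them form a finite union of proper linear subspaces $V_j\subsetneq V$, hence a meagre subset of $\Lambda$, and one simply throws these hyperplanes out of $\mathscr D$. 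You should make the same adjustment; once you do, your argument goes through.
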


In Theorem \ref{f-thm3.2}, we do not assume that $\{t_1, \ldots, t_{N+1}\}$ is 
linearly independent. 

\begin{proof}[Proof of Theorem \ref{f-thm3.2}]
If $N=0$, then the statement is trivial. Therefore, 
we may assume that $N\geq 1$. 

\begin{step}\label{f-thm3.2-step1} 
In this step, we will prove that there exists a dense subset 
$\mathscr E$ of $V$, which 
is a countable intersection of dense open subsets of $V$, 
such that for every $s\in V$ the zero set $(s=0)$ is a smooth 
divisor on $M$ if and only if $s\in \mathscr E$. 

We take a countable covering $\{K_i\}_{i\in \mathbb N}$ of 
$M$ such that $K_i$ is compact for every $i$. 
We may assume that 
$K_i$ is contained in an open subset $U_i$ of $M$ such that 
there exists $s_i\in V$ which is never zero 
on $U_i$ for every $i$. 
We put 
$$
\mathscr E_i:=
\left\{s\in V\, \left|\, 
\begin{array}{l}
\mbox{$(s=0)$ contains no irreducible components of $M$}\\ 
\mbox{and is smooth at every point of $K_i\cap (s=0)$}
\end{array}
\right.
\right\}.
$$
Then $\mathscr E_i$ is open in $V$ by \cite[I Step in the proof of 
(II.5) Theorem]{manaresi} and is dense in $V$ by 
\cite[II Step in the proof of (II.5) Theorem]{manaresi}. 
We put $\mathscr E=\bigcap _{i\in N}\mathscr E_i$. 
Then $\mathscr E$ is dense in $V$ by the Baire category theorem. 
By definition, for every $s\in V$, $(s=0)$ is a smooth 
divisor on $M$ if and only if 
$s\in \mathscr E$. 
By definition, $\mathscr E_i$ is $\mathbb C^\times$-invariant and  
$\mathscr E_i\subset 
V-\{0\}$. 
We put $p(\mathscr E)=\mathscr D$, where $p: V-\{0\}\to \Lambda$ is the 
natural projection. Of course, $\mathscr D$ is dense in $\Lambda$. 
\end{step}
\begin{step}\label{f-thm3.2-step2}
In this step, we will prove that $\Lambda\setminus \mathscr D$ is a 
countable union of locally closed analytic subsets of $\Lambda$. 

Let $\{U_i\}_{i\in \mathbb N}$ be an open covering 
of $M$ on which $\mathscr L$ is trivial as in Step \ref{f-thm3.2-step1}. 
With respect to this trivialization of $\mathscr L$, 
we can see that every $s\in V$ is a holomorphic function on each 
$U_i$. 
Since the number of connected components of $M$ is finite, 
we can take a finite number of 
lineaer subspaces $\{V_j\}_{j=1}^k$ of $V$ such that 
$V_j\subsetneq V$ for every $j$ and 
that $s\in V$ is not identically zero on 
any irreducible component of $M$ if and only if 
$s\in V\setminus \bigcup _{j=1}^k V_j$. 
For each $i$, we can consider the holomorphic map 
$$
F_i: U_i\times V\to \mathbb C\times V
$$
defined by $F_i(x, s)=(s(x), s)$. 
Since every $s\in V^\dag:=V\setminus \bigcup _{j=1}^k V_j$ is not identically 
zero on any irreducible component of $M$, 
$F_i$ is flat on $U_i\times V^\dag$ 
(see \cite[(II.1) Lemma]{manaresi}). 
We consider 
\begin{equation*}
\begin{split}
A_i&:=\left\{\left.(x, s)\in U_i\times V^\dag 
\, \right|\, 
\text{$s(x)=0$ and $(s=0)$ is not smooth at $x$} 
\right\}\\ 
&=F^{-1}_i(\{0\}\times V^\dag)
\cap 
\{(x, s)\in U_i\times V^\dag\, |\, 
F^{-1}_i(F_i(x,s))\  \text{is not smooth at 
$(x,s)$}\}.
\end{split}
\end{equation*}
Then, by \cite[(0.3) a) Proposition]{manaresi}, $A_i$ is an analytic subset of 
$U_i\times V^\dag$ for every $i$. 
Therefore, 
$$
A:=\bigcup _{i\in \mathbb N} A_i\cup 
\left(M\times \bigcup _{j=1}^k V_j\right)
$$
is a countable union of locally closed analytic subsets of 
$M\times V$. 
By construction, $V\setminus \mathscr E=q(A)$, 
where $q:M\times V\to V$ is the natural projection. 
Therefore, $V\setminus \mathscr E$ is a countable union of locally closed 
analytic subsets by \cite[Lemma in (0.2)]{manaresi}. 
Thus we see that $\Lambda\setminus \mathscr D=p(q(A)-\{0\})$ 
is also a countable union of locally closed analytic subsets 
by \cite[Lemma in (0.2)]{manaresi}. 
\end{step}
Anyway, $\Lambda \setminus \mathscr D$ is analytically meagre since 
$\Lambda\setminus \mathscr D$ is a countable union of 
locally closed analytic subsets by Step \ref{f-thm3.2-step2} and 
$\mathscr D$ is dense by Step \ref{f-thm3.2-step1}.
\end{proof}

Although Theorem \ref{f-thm3.2} is sufficient for 
the proof of Theorem \ref{f-thm1.1} in Section \ref{f-sec4}, 
we add some remarks for the reader's convenience. 

\begin{rem}\label{f-rem3.3}
The proof of Theorem \ref{f-thm3.2} says that 
we can take $\mathscr D$ such that 
$\Lambda \setminus \mathscr D$ is a countable 
union of locally closed analytic subsets of $\Lambda$ of codimension 
$\geq 1$ and that 
for every $s\in \Lambda$ the zero set $(s=0)$ defines 
a smooth divisor on $M$ if and only if $s\in \mathscr D$. 
\end{rem}

\begin{rem}\label{f-rem3.4}
Theorem \ref{f-thm3.2} and Remark \ref{f-rem3.3} hold true without 
assuming that $M$ has only finitely many connected components. 
We assume that $M$ has infinitely many connected components. 
Then we have the following irreducible decomposition 
$M=\bigcup_{n\in \mathbb N}M_n$ since $M$ has a countable 
base of open subsets. 
By applying Theorem \ref{f-thm3.2} and Remark \ref{f-rem3.3} 
to each $M_n$, 
we get a dense subset $\mathscr D_n$ of $\Lambda$ 
with the desired properties for every $n$. 
We put $\mathscr D=\bigcap _{n\in \mathbb N}\mathscr D_n$. 
Then $\Lambda\setminus \mathscr D$ is a countable union of 
locally closed analytic subsets of $\Lambda$ of codimension $\geq 1$, 
and for every $s\in \Lambda$ the zero set $(s=0)$ defines 
a smooth divisor on $M$ if and only if $s\in \mathscr D$. 
\end{rem}

We prepare easy lemmas for the proof of Theorem \ref{f-thm1.1} 
in Section \ref{f-sec4}. 

\begin{lem}\label{f-lem3.5}
Let $\mathcal S$ be an analytically meagre subset of $\mathbb P^N$. 
Let $p:\mathbb P^N-\{P\} \to \mathbb P^{N-1}$ be the linear 
projection from $P\in \mathbb P^N$. 
Then there exists an analytically meagre subset $\mathcal S'$ 
of $\mathbb P^{N-1}$ such that 
$p^{-1}(x)\cap \mathcal S$ is an analytically 
meagre subset of $p^{-1}(x)\simeq 
\mathbb C$ for every $x\in \mathbb P^{N-1}\setminus \mathcal S'$. 
\end{lem}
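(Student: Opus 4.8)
The plan is to reduce the assertion to the case of a single irreducible locally closed analytic set and then run the standard upper semicontinuity of fibre dimension, recording the images under $p$ via \cite[Lemma in (0.2)]{manaresi} — the same tool already used in the proof of Theorem \ref{f-thm3.2}. Concretely, I would first invoke Definition \ref{f-def3.1} to write $\mathcal S\subset\bigcup_m Y_m$ with each $Y_m$ a locally closed analytic subset of $\mathbb P^N$ of codimension $\geq1$, decompose each $Y_m$ into its countably many irreducible components, and discard the point $P$ (harmless, since $p^{-1}(x)\subset\mathbb P^N\setminus\{P\}$, so that $p^{-1}(x)\cap\mathcal S=p^{-1}(x)\cap(\mathcal S\setminus\{P\})$). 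After relabelling, $\mathcal S\subset\bigcup_n Z_n$ where each $Z_n$ is an irreducible locally closed analytic subset of $\mathbb P^N\setminus\{P\}$, say closed analytic in an open set $U_n$, with $\dim Z_n\leq N-1$. For $x\in\mathbb P^{N-1}$ I write $\ell_x:=p^{-1}(x)\simeq\mathbb C$ for the corresponding punctured line through $P$.

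The candidate bad set is $\mathcal S':=\bigcup_n\mathcal S'_n$, where $\mathcal S'_n:=\{x\in\mathbb P^{N-1}\mid\dim(\ell_x\cap Z_n)\geq1\}$. Since $\ell_x\cap Z_n$ is an analytic subset of the one-dimensional open set $\ell_x\cap U_n$, one has $x\in\mathcal S'_n$ precisely when $Z_n$ contains a nonempty open subset of $\ell_x$; and for $x\notin\mathcal S'_n$ the set $\ell_x\cap Z_n$ is a discrete, hence countable, locally closed analytic subset of $\ell_x$ of codimension $\geq1$. Therefore, once $\mathcal S'$ is shown to be analytically meagre in $\mathbb P^{N-1}$, every $x\notin\mathcal S'$ gives $p^{-1}(x)\cap\mathcal S\subset\bigcup_n(\ell_x\cap Z_n)$, which is a countable union of locally closed analytic subsets of $p^{-1}(x)\simeq\mathbb C$ of codimension $\geq1$ — exactly the desired conclusion.

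It then remains to check that each $\mathcal S'_n$ is analytically meagre, and I would distinguish two cases for $p|_{Z_n}\colon Z_n\to\mathbb P^{N-1}$. If $p|_{Z_n}$ is not dominant, its generic rank is $\leq N-2$, so $p(Z_n)$ has Lebesgue measure zero; since $\mathcal S'_n\subset p(Z_n)$ and, by \cite[Lemma in (0.2)]{manaresi}, $p(Z_n)$ is a countable union of locally closed analytic subsets of $\mathbb P^{N-1}$, each such subset — being of measure zero — has dimension $\leq N-2$, and $\mathcal S'_n$ is analytically meagre. If $p|_{Z_n}$ is dominant, then $\dim Z_n=N-1$ and its generic fibre is $0$-dimensional; by upper semicontinuity of fibre dimension the locus $B_n:=\{z\in Z_n\mid\dim_z(\ell_{p(z)}\cap Z_n)\geq1\}$ is a proper closed analytic subset of the irreducible $Z_n$, hence $\dim B_n\leq N-2$, and a direct check gives $\mathcal S'_n=p(B_n)$; as before, $p(B_n)$ is a measure-zero countable union of locally closed analytic subsets of $\mathbb P^{N-1}$, all of dimension $\leq N-2$, so $\mathcal S'_n$ is analytically meagre. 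Then $\mathcal S'=\bigcup_n\mathcal S'_n$ is analytically meagre in $\mathbb P^{N-1}$ and has the required property.

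I expect the one genuinely delicate point to be the bookkeeping in the last step: the $Z_n$ are merely \emph{locally} closed, so $p(Z_n)$ and $p(B_n)$ need not be analytic, and one must cite \cite[Lemma in (0.2)]{manaresi} to express them as countable unions of locally closed analytic subsets and then use the measure-zero observation (equivalently, $\dim p(\cdot)\leq\dim(\cdot)$) to see that those pieces automatically have codimension $\geq1$ in $\mathbb P^{N-1}$. The equivalence in the definition of $\mathcal S'_n$, the identity $\mathcal S'_n=p(B_n)$ in the dominant case, and the upper semicontinuity inputs are all routine.
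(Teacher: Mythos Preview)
Your argument is correct and follows essentially the same route as the paper: both decompose $\mathcal S$ into countably many locally closed analytic pieces, separate them according to whether the restriction of $p$ has generically finite fibres or small image, and invoke \cite[Lemma in (0.2)]{manaresi} to control the projected images. The only difference is that the paper simply asserts that one can take a ``suitable subdivision'' into the two types $Y_j$ (top-dimensional, no positive-dimensional fibres) and $Z_k$ (image of dimension $\leq N-2$), whereas you make explicit the upper semicontinuity of fibre dimension that underlies that subdivision; your $\mathcal S'$ is accordingly defined directly as the locus of positive-dimensional fibres rather than as the union of the $p(Z_k)$, but the two bad sets play the same role.
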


\begin{proof} 
We may assume that $\mathcal S$ is a countable 
union of locally closed analytic 
subsets of $\mathbb P^N$. 
We note that $p(V-\{P\})$ is a countable union 
of locally closed analytic subsets of $\mathbb P^{N-1}$, 
where $V$ is any locally closed analytic subset of $\mathbb P^N$ 
(see, for example, \cite[Lemma in (0.2)]{manaresi}). 
By taking a suitable subdivision of $\mathcal S$ into 
locally closed analytic subsets of $\mathbb P^N$, 
we can write 
$$
\mathcal S=\left(\bigcup _{j\in \mathbb N} Y_j\right)
\cup 
\left(\bigcup _{k\in \mathbb N}Z_k\right), 
$$ 
where $\dim Y_j=N-1$ such that $p: Y_j-\{P\}\to \mathbb P^{N-1}$ 
has no positive dimensional fibers for every $j$, and 
any irreducible component of $p(Z_k-\{P\})$ has dimension 
$\leq N-2$ for every $k$. 
We put $\mathcal S'=\bigcup _{k\in \mathbb N}p(Z_k-\{P\})$. 
Then $\mathcal S'$ satisfies the desired properties. 
\end{proof}

Lemma \ref{f-lem3.6} will play an important role 
in the induction on $N$. 

\begin{lem}\label{f-lem3.6}
Let $\mathcal G_N$ be a subset of $\mathbb P^N$ and 
let $\Sigma$ be an analytically meagre subset of $\mathbb P^N$. 
Let $\mathcal G_{N-1}$ be a subset of $\mathbb P^{N-1}$ such that 
$\mathcal G_{N-1}\setminus \mathcal S_{N-1}$ is dense 
in $\mathbb P^{N-1}$ in the classical topology 
for any analytically meagre subset $\mathcal 
S_{N-1}$ of $\mathbb P^{N-1}$. 
Let $p:\mathbb P^N-\{P\}\to \mathbb P^{N-1}$ be the 
linear projection from $P\in \mathbb P^N$. 
Assume that almost all points of $p^{-1}(x)$ is contained 
in $\mathcal G_N$ for every $x\in \mathcal G_{N-1}$ with 
$p^{-1}(x)\setminus \Sigma\ne \emptyset$. Then 
$\mathcal G_N\setminus \mathcal S_N$ is dense 
in $\mathbb P^N$ in 
the classical topology for any analytically meagre subset $\mathcal S_N$ 
of $\mathbb P^N$. 
\end{lem}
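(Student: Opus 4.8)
The plan is to show, for an arbitrary analytically meagre subset $\mathcal{S}_N$ of $\mathbb{P}^N$ and an arbitrary nonempty classically open subset $U\subset\mathbb{P}^N$, that $(\mathcal{G}_N\setminus\mathcal{S}_N)\cap U\neq\emptyset$. Since $\{P\}$ has empty interior in $\mathbb{P}^N$, after replacing $U$ by $U\setminus\{P\}$ we may and do assume $P\notin U$. The linear projection $p$ is a holomorphic submersion, hence an open map, so $p(U)$ is a nonempty open subset of $\mathbb{P}^{N-1}$.

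First I would apply Lemma \ref{f-lem3.5} to the analytically meagre subset $\Sigma\cup\mathcal{S}_N$ of $\mathbb{P}^N$. This produces an analytically meagre subset $\mathcal{S}'$ of $\mathbb{P}^{N-1}$ such that $p^{-1}(x)\cap(\Sigma\cup\mathcal{S}_N)$ is analytically meagre in $p^{-1}(x)\simeq\mathbb{C}$ for every $x\in\mathbb{P}^{N-1}\setminus\mathcal{S}'$. An analytically meagre subset of $\mathbb{C}$ is, by Definition \ref{f-def3.1}, contained in a countable union of locally closed analytic subsets of codimension $\geq 1$, that is, of zero-dimensional (discrete) analytic subsets, hence is contained in a countable set. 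Thus $p^{-1}(x)\setminus(\Sigma\cup\mathcal{S}_N)$ is the complement of a countable subset of $p^{-1}(x)$ for such $x$.

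Next, applying the density hypothesis on $\mathcal{G}_{N-1}$ to the analytically meagre set $\mathcal{S}'$, the set $\mathcal{G}_{N-1}\setminus\mathcal{S}'$ is dense in $\mathbb{P}^{N-1}$ and hence meets the nonempty open set $p(U)$; fix a point $x\in(\mathcal{G}_{N-1}\setminus\mathcal{S}')\cap p(U)$. By the previous step $p^{-1}(x)\setminus(\Sigma\cup\mathcal{S}_N)$ is the complement of a countable subset of $p^{-1}(x)$; in particular $p^{-1}(x)\setminus\Sigma\neq\emptyset$, so the main hypothesis of the lemma applies at $x$ and almost all points of $p^{-1}(x)$ lie in $\mathcal{G}_N$ — that is, $p^{-1}(x)\setminus\mathcal{G}_N$ is analytically meagre, in particular contained in a countable subset of $p^{-1}(x)\simeq\mathbb{C}$.

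Finally, $p^{-1}(x)\cap U$ is a nonempty open subset of $p^{-1}(x)\simeq\mathbb{C}$ (nonempty because $x\in p(U)$), while the set $\bigl(p^{-1}(x)\setminus\mathcal{G}_N\bigr)\cup\bigl(p^{-1}(x)\cap(\Sigma\cup\mathcal{S}_N)\bigr)$ is contained in a countable subset of $\mathbb{C}$. Since a nonempty open subset of $\mathbb{C}$ is uncountable, there exists $y\in p^{-1}(x)\cap U$ with $y\in\mathcal{G}_N$ and $y\notin\Sigma\cup\mathcal{S}_N$; then $y\in(\mathcal{G}_N\setminus\mathcal{S}_N)\cap U$, which is what we wanted. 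I expect no serious obstacle here; the only points requiring a little care are excluding the base point $P$ of the projection, recording that analytically meagre subsets of the one-dimensional space $\mathbb{C}$ are countable, and fixing the meaning of ``almost all'' as ``off an analytically meagre, hence countable, subset of the fibre''.
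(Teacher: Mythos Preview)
Your argument is correct and follows exactly the paper's approach: set $\mathcal S=\Sigma\cup\mathcal S_N$, apply Lemma~\ref{f-lem3.5} to obtain $\mathcal S'\subset\mathbb P^{N-1}$, use the density of $\mathcal G_{N-1}\setminus\mathcal S'$ to pick a good base point $x$, and then argue on the fibre $p^{-1}(x)$. One small correction worth flagging, since you explicitly raised it: in the applications of this lemma (see Step~\ref{f-step1} and Lemma~\ref{f-lem4.5}), ``almost all'' on the one-dimensional fibre comes from Fubini's theorem and means \emph{off a set of Lebesgue measure zero}, not off an analytically meagre (hence countable) set; your final step should therefore read ``a nonempty open subset of $\mathbb C$ has positive measure, while $(p^{-1}(x)\setminus\mathcal G_N)\cup(p^{-1}(x)\cap\mathcal S_N)$ has measure zero'', which yields the same conclusion.
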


\begin{proof}
We put $\mathcal S=\Sigma\cup \mathcal S_N$. 
Then $\mathcal S$ is an analytically meagre subset of $\mathbb P^N$. 
We can define an analytically meagre subset $\mathcal S'$ of $\mathbb P^{N-1}$ 
as in the proof of Lemma \ref{f-lem3.5}. 
Then $\mathcal G_{N-1}\setminus \mathcal S'$ is dense in 
$\mathbb P^{N-1}$ in the classical topology by assumption. 
By assumption again, almost all points of $p^{-1}(x)$ is contained in 
$\mathcal G_N\setminus \mathcal S_N$ for every $x\in \mathcal G_{N-1}\setminus 
\mathcal S'$. 
Therefore, we can easily see that $\mathcal G_N\setminus 
\mathcal S_N$ is dense in $\mathbb P^N$ in the classical topology. 
\end{proof}

We will use Lemma \ref{f-lem3.6} in order to prove the density of $\mathcal G$ in 
Theorem \ref{f-thm1.1} by induction on $N$. 

\begin{rem}\label{f-rem3.7}
In Lemma \ref{f-lem3.6}, we assume that 
$\mathbb P^N$ is the linear system 
$\Lambda=|\mathscr O_{\mathbb P^N}(1)|$ as  
in Theorem \ref{f-thm1.1}. 
We assume that $P\in \mathbb P^N=\Lambda$ corresponds to 
a hyperplane $H'_0$ on the original projective space $\mathbb P^N$. 
Let $p:\mathbb P^N-\{P\}\to \mathbb P^{N-1}$ be the linear 
projection as in Lemma \ref{f-lem3.6}. 
Then we can see $\mathbb P^{N-1}$ as the 
linear system $\Lambda|_{H'_0}$. 
\end{rem}

\section{Proof of Theorem \ref{f-thm1.1}}\label{f-sec4}

In this section, we will prove Theorem \ref{f-thm1.1}. 
We prepare some lemmas before we start the 
proof of Theorem \ref{f-thm1.1}. 
Lemma \ref{f-lem4.1} is nothing but \cite[Lemma 3.2]{fujino-matsumura}. 
Note that a main ingredient of 
Lemma \ref{f-lem4.1} is the Ohsawa--Takegoshi $L^2$ extension 
theorem. 

\begin{lem}\label{f-lem4.1} 
Let $X$ and $\varphi$ be as in Theorem \ref{f-thm1.1}. 
Let $H'_i$ be an element of $\Lambda$ for $1\leq i\leq k$. 
We assume the following condition:  
\begin{itemize}
\item[$\spadesuit$]\label{cond-spade} 
$H^\dag_i:=
(h^\dag)^*H'_i$ is a well-defined smooth divisor on $X^\dag$ 
for every $1\leq i\leq k$ and  
$\sum _{i=1}^kH^\dag_i$ is a simple normal crossing 
divisor on $X^\dag$. 
Moreover, 
for every $1\leq i_1<i_2<\cdots < i_l\leq k$ and any $P\in H^\dag_{i_1}
\cap H^\dag_{i_2}\cap \cdots \cap H^\dag_{i_l}$, 
the set $\{f_{i_1}, f_{i_2}, \ldots, f_{i_l}\}$ is a regular sequence 
for $\mathscr O_{X, P}/\mathscr J(\varphi)_P$, where 
$f_i$ is a $($local$)$ defining equation of $H^\dag_i$ for every $i$. 
\end{itemize}
We put $F_i:=H^\dag_1\cap H^\dag_2\cap \cdots \cap H^\dag_i$ 
for $1 \leq i \leq k$. 
We assume that the equality 
$$\mathscr J(\varphi|_{F_k})=\mathscr J(\varphi)|_{F_k}$$ 
holds. 
Then $$\mathscr J(\varphi|_{F_j})=\mathscr J(\varphi)|_{F_j}$$ holds 
in a neighborhood of $F_k$ for every $j$. 
\end{lem}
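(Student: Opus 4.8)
The plan is to prove, by descending induction on $j$, that $\mathscr J(\varphi|_{F_j})=\mathscr J(\varphi)|_{F_j}$ holds in a neighborhood of $F_k$ inside $F_j$; the case $j=k$ is exactly the hypothesis, so it suffices to pass from $j$ to $j-1$. Throughout I would use the unconditional inclusions coming from the Ohsawa--Takegoshi $L^2$ extension theorem: applied on the manifold $F_{j-1}$ to the quasi-plurisubharmonic function $\varphi|_{F_{j-1}}$ and the smooth divisor $F_j=(f_j|_{F_{j-1}}=0)$ it gives $\mathscr J(\varphi|_{F_j})\subseteq \mathscr J(\varphi|_{F_{j-1}})|_{F_j}$, and iterating it along the flag $F_{j-1}\subseteq F_{j-2}\subseteq\cdots\subseteq F_1\subseteq X$ it gives $\mathscr J(\varphi|_{F_{j-1}})\subseteq \mathscr J(\varphi)|_{F_{j-1}}$.

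Write $A:=\mathscr J(\varphi|_{F_{j-1}})$ and $B:=\mathscr J(\varphi)|_{F_{j-1}}$ for the two coherent ideal sheaves on $F_{j-1}$, so $A\subseteq B$. Restricting to the divisor $F_j$ and invoking the induction hypothesis $\mathscr J(\varphi|_{F_j})=\mathscr J(\varphi)|_{F_j}=B|_{F_j}$ near $F_k$ (using that restriction of ideal sheaves is transitive), the chain $B|_{F_j}=\mathscr J(\varphi|_{F_j})\subseteq \mathscr J(\varphi|_{F_{j-1}})|_{F_j}=A|_{F_j}\subseteq B|_{F_j}$ shows $A|_{F_j}=B|_{F_j}$ in a neighborhood of $F_k$; equivalently, $B\subseteq A+g\,\mathscr O_{F_{j-1}}$ there, where $g:=f_j|_{F_{j-1}}$. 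The task is then to upgrade this equality-after-restriction-to-$F_j$ to the genuine equality $A=B$ near $F_k$, and this is where condition $\spadesuit$ enters: for $P\in F_k=H^\dag_1\cap\cdots\cap H^\dag_k$ the sequence $\{f_1,\ldots,f_j\}$ is regular on $\mathscr O_{X,P}/\mathscr J(\varphi)_P$, so $g$ is a nonzerodivisor on $\mathscr O_{X,P}/(\mathscr J(\varphi)_P+(f_1,\ldots,f_{j-1}))=(\mathscr O_{F_{j-1}}/B)_P$.

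The upgrade is then a short Nakayama argument at each $P\in F_k$: given $b\in B_P$, write $b=a+gc$ with $a\in A_P\subseteq B_P$ and $c\in\mathscr O_{F_{j-1},P}$; then $gc=b-a\in B_P$, so the nonzerodivisor property forces $c\in B_P$ and hence $b\in A_P+gB_P$. Thus $B_P=A_P+gB_P$, i.e., $(B/A)_P=g\cdot(B/A)_P$; since $g(P)=f_j(P)=0$, $g$ lies in the maximal ideal of $\mathscr O_{F_{j-1},P}$, so Nakayama's lemma applied to the finitely generated module $(B/A)_P$ gives $(B/A)_P=0$, i.e., $A_P=B_P$. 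As $P$ ranges over $F_k$ and $B/A$ is coherent, $A=B$ in a neighborhood of $F_k$, which is the assertion $\mathscr J(\varphi|_{F_{j-1}})=\mathscr J(\varphi)|_{F_{j-1}}$ near $F_k$. Running this $k$ times and intersecting the finitely many neighborhoods completes the proof.

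The step I expect to be the (mild) obstacle is the last one: $\mathscr J(\varphi)|_{F_{j-1}}$ is merely a restriction of a multiplier ideal, not itself a multiplier ideal, so one cannot feed it into any analytic machinery directly, and condition $\spadesuit$ is precisely the algebraic replacement that makes the Nakayama step legitimate. A secondary point to keep straight is that the induction hypothesis is available only in a neighborhood of $F_k$ rather than on all of $F_j$ (since $F_k\subsetneq F_j$ in general), but this is harmless because the Nakayama argument is carried out pointwise at points of $F_k$.
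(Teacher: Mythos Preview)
Your proof is correct. The paper does not supply its own argument for this lemma but refers to \cite[Lemmas 3.1, 3.2, Remark 3.4, Lemma 3.5]{fujino-matsumura}, singling out the Ohsawa--Takegoshi $L^2$ extension theorem as the main ingredient; your descending induction---Ohsawa--Takegoshi for the unconditional inclusions $\mathscr J(\varphi|_{F_{j-1}})\subseteq\mathscr J(\varphi)|_{F_{j-1}}$ and $\mathscr J(\varphi|_{F_j})\subseteq\mathscr J(\varphi|_{F_{j-1}})|_{F_j}$, followed by the regular-sequence/Nakayama step to lift the equality from $F_j$ to $F_{j-1}$---is exactly the argument those references encode.
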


For the proof and 
the details of Lemma \ref{f-lem4.1}, 
see \cite[Lemmas 3.1 and 3.2, Remark 3.4, and Lemma 3.5]{fujino-matsumura}. 

\begin{rem}\label{f-rem4.2}
Condition $\spadesuit$ in Lemma \ref{f-lem4.1} 
does not depend on the order of $\{H'_1, H'_2, \ldots, H'_k\}$ 
(see \cite[Remark 3.3]{fujino-matsumura}). 
\end{rem}

Lemma \ref{f-lem4.3} 
below is similar to \cite[Lemma 3.6]{fujino-matsumura}. 

\begin{lem}\label{f-lem4.3}
Let $X$ and $\Lambda$ be as in Theorem \ref{f-thm1.1}. 
Let $\Lambda_0$ be an $m$-dimensional 
sublinear system of $\Lambda$ spanned by 
$\{H'_1, \ldots, H'_m, H'_{m+1}\}$ such that 
$\{H'_1, \ldots, H'_m, H'_{m+1}\}$ satisfies $\spadesuit$. 
We put 
$$
\mathscr F_0=\{ H'\in \Lambda_0 \, | \, 
\{H'_1, \ldots, H'_m, H'\} \ \text{satisfies $\spadesuit$}\}. 
$$
Then $\Lambda_0\setminus \mathscr F_0$ is analytically meagre. 

Moreover, we assume that 
$\mathscr J(\varphi|_F)=\mathscr J(\varphi)|_F$ holds, 
where $F$ is an irreducible component of $H^\dag_1\cap 
\cdots \cap H^\dag_{m+1}$. 
Let $H'$ be a member of $\mathscr F_0$. 
Then 
$$
\mathscr J(\varphi|_{H^\dag})=\mathscr J(\varphi)|_{H^\dag}
$$ 
holds in a neighborhood of $F$, where $H^\dag=(h^\dag)^*H'$. 
\end{lem}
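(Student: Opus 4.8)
The plan is to treat the two assertions of Lemma~\ref{f-lem4.3} separately, and to dispose of the second one (the equality of multiplier ideal sheaves near $F$) first, since it is a short deduction from Lemma~\ref{f-lem4.1}. Fix a basis $s_1,\dots,s_{m+1}$ of the space of defining sections cutting out $\Lambda_0$, with $H'_i=(s_i=0)$, and write the defining section of a member $H'$ of $\Lambda_0$ as $\sum_i\lambda_i s_i$. Since this section vanishes on $B:=H^\dag_1\cap\cdots\cap H^\dag_{m+1}$, we have $F\subset B\subset H^\dag$, and $B\neq\emptyset$ because $F\neq\emptyset$. Next I would note that $H'\in\mathscr F_0$ forces $\lambda_{m+1}\neq 0$: in local coordinates near a point of $B$ adapted to the simple normal crossing divisor $\sum_{i=1}^{m+1}H^\dag_i$, the divisor $\sum_{i=1}^m H^\dag_i+H^\dag$ is simple normal crossing only when $\lambda_{m+1}\neq 0$. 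The same computation shows that, for $\lambda_{m+1}\neq 0$, the intersection $H^\dag\cap H^\dag_1\cap\cdots\cap H^\dag_m$ coincides with $H^\dag_{m+1}\cap H^\dag_1\cap\cdots\cap H^\dag_m=B$ near $B$. As $B$ is smooth (by the crossing condition) and $F$ is one of its connected components, there is an open subset $U\subset X^\dag$ with $U\cap B=F$. I would then reorder the collection as $\{H',H'_1,\dots,H'_m\}$, still satisfying $\spadesuit$ by Remark~\ref{f-rem4.2}, so that over $U$ the chain of intersections in Lemma~\ref{f-lem4.1} becomes $F_1=H^\dag\cap U$, \dots, $F_{m+1}=H^\dag\cap H^\dag_1\cap\cdots\cap H^\dag_m\cap U=F$. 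The hypothesis $\mathscr J(\varphi|_F)=\mathscr J(\varphi)|_F$ is then exactly the input of Lemma~\ref{f-lem4.1} with $k=m+1$ (applied over $U$, where all the relevant conditions and the conclusion are local), and its conclusion with $j=1$ yields $\mathscr J(\varphi|_{H^\dag})=\mathscr J(\varphi)|_{H^\dag}$ in a neighborhood of $F$.

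For the first assertion I would exhibit an analytically meagre set $\mathcal B\subset\Lambda_0$ with $\Lambda_0\setminus\mathcal B\subset\mathscr F_0$, built as a countable union of proper linear subspaces of $\Lambda_0$ together with finitely many exceptional sets supplied by Theorem~\ref{f-thm3.2}. First, $H^\dag=(h^\dag)^*H'$ fails to be well-defined exactly when $h^\dag(X^\dag_\alpha)\subset H'$ for one of the countably many irreducible components $X^\dag_\alpha$ of $X^\dag$; for each $\alpha$ this is a linear subspace of $\Lambda_0$, and it is proper because $\{H'_1,\dots,H'_m\}$ already satisfies $\spadesuit$, so no $X^\dag_\alpha$ lies in any $H^\dag_i$. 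I also discard the proper subspace $\langle H'_1,\dots,H'_m\rangle=\{\lambda_{m+1}=0\}$. Granting that $\sum_{i=1}^m H^\dag_i$ is simple normal crossing, in order for $\sum_{i=1}^m H^\dag_i+H^\dag$ to be simple normal crossing it suffices that $H^\dag$ restrict to a smooth divisor on each locally closed stratum $F^\circ_I:=\bigl(\bigcap_{i\in I}H^\dag_i\bigr)\setminus\bigcup_{j\in\{1,\dots,m\}\setminus I}H^\dag_j$, $I\subset\{1,\dots,m\}$, since these strata partition $X^\dag$ and transversality of $H^\dag$ to each stratum is precisely the crossing condition at its points. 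For $I\subsetneq\{1,\dots,m\}$ we have $B\subset H^\dag_j$ for some $j\notin I$, hence $B\cap F^\circ_I=\emptyset$, so $s_1,\dots,s_{m+1}$ restrict to generating sections of $(h^\dag)^*\mathscr O_{\mathbb P^N}(1)$ on $F^\circ_I$ and Theorem~\ref{f-thm3.2} (with Remark~\ref{f-rem3.4}) produces a meagre exceptional set in $\Lambda_0$; there are only finitely many such $I$. The stratum $I=\{1,\dots,m\}$ is where the linear system fails to be base-point free: here I would apply Theorem~\ref{f-thm3.2} to $F_{\{1,\dots,m\}}\setminus B$ and then invoke the coordinate computation above to conclude that, for $\lambda_{m+1}\neq 0$, $H^\dag$ meets $F_{\{1,\dots,m\}}$ along the smooth divisor $B$ near $B$.

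It remains to secure the regular-sequence clause of $\spadesuit$ for $\{H'_1,\dots,H'_m,H'\}$. Fix $I=\{i_1<\cdots<i_{l-1}\}\subset\{1,\dots,m\}$, set $F_I=\bigcap_{t=1}^{l-1}H^\dag_{i_t}$, and let $P\in F_I\cap H^\dag$. Since $\{H'_1,\dots,H'_m\}$ satisfies $\spadesuit$, $\{f_{i_1},\dots,f_{i_{l-1}}\}$ is already a regular sequence for $\mathscr O_{X,P}/\mathscr J(\varphi)_P$, so all that is needed is that a local equation $g$ of $H^\dag$ be a non-zero-divisor on $\mathscr O_{X,P}/\bigl(\mathscr J(\varphi)_P+(f_{i_1},\dots,f_{i_{l-1}})\bigr)=\bigl(\mathscr O_{F_I}/\mathscr J(\varphi)|_{F_I}\bigr)_P$; equivalently, that $H^\dag$ contain no associated subvariety of the coherent sheaf $\mathscr O_{F_I}/\mathscr J(\varphi)|_{F_I}$. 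There are only countably many such subvarieties $Z\subset F_I$, and for each one $\{H'\in\Lambda_0:H^\dag\supset Z\}$ is a linear subspace $L_Z$ of $\Lambda_0$. I claim $L_Z\subsetneq\Lambda_0$: equality would force $Z\subset\bigcap_{i=1}^{m+1}H^\dag_i=B\subset H^\dag_{m+1}$, but the $\spadesuit$ hypothesis on $\{H'_1,\dots,H'_{m+1}\}$, applied to the subset $\{i_1,\dots,i_{l-1},m+1\}$, says exactly that $f_{m+1}|_{F_I}$ is a non-zero-divisor on $\mathscr O_{F_I}/\mathscr J(\varphi)|_{F_I}$, i.e.\ that no associated subvariety of this sheaf lies in $H^\dag_{m+1}$. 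Taking $\mathcal B$ to be the union of all these proper linear subspaces of $\Lambda_0$ together with the finitely many exceptional sets from Theorem~\ref{f-thm3.2} gives an analytically meagre set, and unwinding the definitions shows $\Lambda_0\setminus\mathcal B\subset\mathscr F_0$. The step I expect to cost the most care is the interplay between the base locus $B$ and the stratification by the $F^\circ_I$: the analytic Bertini theorem needs base-point freeness, so the behavior of the general member $H^\dag$ along $B$, and in particular the properness of the linear conditions $L_Z$ — which is exactly where one uses that $\{H'_1,\dots,H'_{m+1}\}$, and not merely $\{H'_1,\dots,H'_m\}$, satisfies $\spadesuit$ — has to be handled directly.
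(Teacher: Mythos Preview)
Your argument is correct and follows essentially the same route as the paper's proof: both remove the base locus $B=H^\dag_1\cap\cdots\cap H^\dag_{m+1}$, apply the analytic Bertini theorem (Theorem~\ref{f-thm3.2}) on the complement where $\Lambda_0$ is free, use the equality $H^\dag_1\cap\cdots\cap H^\dag_m\cap H^\dag=B$ for $\lambda_{m+1}\neq 0$ to handle the situation along $B$, avoid the associated primes of the relevant quotient sheaves, and deduce the second assertion from Lemma~\ref{f-lem4.1} via the reordering in Remark~\ref{f-rem4.2}. Your write-up is more explicit than the paper's in the stratification by the $F^\circ_I$, in the local-coordinate verification near $B$, and in showing that each $L_Z$ is a \emph{proper} subspace (which is indeed where the full $\spadesuit$ hypothesis on $\{H'_1,\dots,H'_{m+1}\}$ is used); the paper compresses all of this into the observation that the relevant associated primes are finite in number (by relative compactness of $X^\dag$) and a single appeal to Theorem~\ref{f-thm3.2} on $X^\dag\setminus B$ and on each $H^\dag_{i_1}\cap\cdots\cap H^\dag_{i_l}\setminus B$.
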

\begin{proof}
Let $\widetilde\Lambda_0$ be the sublinear system of $\Lambda_0$ spanned 
by $\{H'_1, \ldots, H'_m\}$. Then we see that 
$$
H^\dag_1\cap \cdots \cap H^\dag_m\cap H^\dag_{m+1}
=H^\dag_1\cap \cdots \cap H^\dag_m\cap H^\dag
$$ holds for every $H'\in \Lambda_0\setminus \widetilde\Lambda_0$. 
We note that the number of irreducible components of 
$H^\dag_{i_1}\cap H^\dag_{i_2}\cap \cdots \cap H^\dag_{i_l}$ with 
$1\leq i_1<i_2<\cdots <i_l\leq m$ is finite. 
We also note that the number of the associated primes of 
$
\mathcal O_{X^\dag}/\mathcal J (\varphi)|_{X^\dag}$ and 
the number of the associated primes of 
$$
\mathcal O_{H^\dag_{i_1}\cap \cdots \cap H^\dag_{i_l}}/
\mathscr J (\varphi) |_{H^\dag_{i_1}\cap \cdots \cap H^\dag_{i_l}}
$$ 
with 
$1\leq i_1<i_2<\cdots <i_l\leq m$ 
are finite (see, for example, \cite[(I.6) Lemma]{manaresi}). 
It is obvious that $H^\dag_1\cap \cdots \cap H^\dag_{m+1}$ is 
empty on $X^\dag\setminus 
H^\dag_1\cap \cdots \cap H^\dag_{m+1}$.  
Therefore, by applying Theorem \ref{f-thm3.2} to 
$X^\dag\setminus H^\dag_1\cap \cdots \cap H^\dag_{m+1}$ and 
${H^\dag_{i_1}\cap \cdots \cap H^\dag_{i_l}}\setminus 
H^\dag_1\cap \cdots \cap H^\dag_{m+1}$ for every 
$1\leq i_1<i_2<\cdots< i_l\leq m$, we can easily check that 
$\Lambda_0\setminus \mathscr F_0$ is analytically meagre. 

Let $H'$ be a member of $\mathscr F_0$. Then 
$$
H^\dag_1\cap \cdots \cap H^\dag_m\cap H^\dag_{m+1}
=H^\dag_1\cap \cdots \cap H^\dag_m\cap H^\dag
$$ 
always holds. Therefore, $F$ is an irreducible component of 
$H^\dag_1\cap \cdots \cap H^\dag_m\cap H^\dag$. Thus, 
by Lemma \ref{f-lem4.1} and Remark \ref{f-rem4.2}, 
the equality 
$\mathscr J(\varphi|_{H^\dag})=\mathscr J(\varphi)|_{H^\dag}$ holds 
in a neighborhood 
of $F$ for every $H'\in \mathcal F_0$. 
\end{proof}

The following example may help the reader understand 
Theorem \ref{f-thm1.1} and its proof given below. 

\begin{ex}\label{f-ex4.4}
We put 
$$
\Delta^n=\{(z_1, \ldots, z_n)\in \mathbb C^n\, |\, |z_1|<1, \cdots, |z_n|<1\}. 
$$ 
Let $\pi:\Delta^n\to \Delta=\{z\in \mathbb C \, |\, |z|<1\}
$ be the projection given by $(z_1, \ldots, z_n)\mapsto z_n$. 
Let $\varphi$ be a quasi-plurisubharmonic function in a 
neighborhood of $\overline {\Delta^n}$, that is, 
the closure of $\Delta^n$ in $\mathbb C^n$. 
Then, by the Ohsawa--Takegoshi $L^2$ extension theorem, 
we have the following inclusion 
$$
\mathscr J(\varphi|_{H_s})\subset \mathscr J(\varphi)|_{H_s}
$$ 
for every $s\in \Delta$, where $H_s=\pi^{-1}(s)$. 
On the other hand, Fubini's theorem implies 
$$
\mathscr J(\varphi|_{H_s})\supset \mathscr J(\varphi)|_{H_s}
$$ 
for almost all $s\in \Delta$. 
Therefore, the equality 
$$
\mathscr J(\varphi|_{H_s})=\mathscr J(\varphi)|_{H_s}
$$ 
holds for almost all $s\in \Delta$. 
We do not know whether 
$$
\{ s\in \Delta\, |\, \mathscr J (\varphi|_{H_s})\subsetneq \mathscr J
(\varphi)|_{H_s}\}
$$ 
is a pluripolar subset of $\Delta$ or not 
(see Question \ref{f-question1.2}). 
\end{ex} 

Let us prove Theorem \ref{f-thm1.1}. 

\begin{proof}[Proof of Theorem \ref{f-thm1.1}]
Without loss of generality, we may assume that $S$ has a 
countable base of open subsets by shrinking $S$ suitably. 
Moreover, by replacing 
$S$ with its smaller relatively compact open subset if necessary, 
we may further assume that 
$S$ is a relatively compact open subset of a complex analytic 
space throughout the proof of Theorem \ref{f-thm1.1}. 
Of course, we may assume that 
every irreducible component of $X$ intersects with 
$X^\dag$ by abandoning unnecessary irreducible components of $X$. 
\setcounter{step}{0}
\begin{step}\label{f-step1}
In this step, we will prove that $\mathcal G$ is dense in $\Lambda$ 
in the classical topology under the assumption that $N=1$. 
More generally, we will see that $\mathcal H$, 
$\mathcal G\setminus \mathcal S$, and $\mathcal H\setminus 
\mathcal S$ are dense in $\Lambda$ in the classical topology 
for any analytically meagre subset $\mathcal S$ of $\Lambda$ under 
the assumption that $N=1$. 

By Sard's theorem (see, for 
example, \cite[(I.1) Theorem]
{manaresi}), there exists a countable subset $\Sigma$ of 
$\mathbb P^1$ such that 
$X_x=h^*x$ is a smooth divisor on $X$ 
for every $x\in \mathbb P^1\setminus 
\Sigma$. Of course, it may happen that 
$h^{-1}(x)$ is empty. 
By the Ohsawa--Takegoshi $L^2$ extension 
theorem, 
we have $$\mathscr J(\varphi|_{X_x})\subset 
\mathscr J(\varphi)|_{X_x}$$ for every $x\in \mathbb P^1\setminus 
\Sigma$. 
On the other hand, by Fubini's theorem, 
we see that  
$$
\mathscr J(\varphi|_{X^\dag_x})\supset \mathscr J(\varphi)|_{X_x^\dag}
$$ 
holds for almost all $x\in \mathbb P^1\setminus \Sigma$, 
where $X_x^\dag:=X_x\cap X^\dag$. 
This means that $\mathcal G$ is dense in $\Lambda \simeq 
\mathbb P^1$ in the classical topology. 
Since there are 
only finitely many associated primes of $\mathscr O_X/\mathscr J(\varphi)$ on 
$X^\dag$ (see, for example, \cite[(I.6) Lemma]{manaresi}), 
$\mathcal G\setminus \mathcal H$ is an analytically meagre subset of 
$\Lambda$. 
We note that $(\Lambda\setminus \mathcal G)\cup 
\mathcal S$ has measure zero for any analytically meagre subset 
$\mathcal S$ of $\Lambda\simeq \mathbb P^1$. 
Therefore, we see that $\mathcal H$, $\mathcal G 
\setminus \mathcal S$, and $\mathcal H\setminus \mathcal S$ 
are dense in $\Lambda$ in the classical topology for any analytically meagre subset 
$\mathcal S$ of $\Lambda$. 
\end{step}
\begin{step}\label{f-step2}
By Step \ref{f-step1}, 
we can 
prove the following lemma. 

\begin{lem}\label{f-lem4.5}
Let $H'_1$ and $H'_2$ be two members of $\Lambda$ 
such that $\{H'_1, H'_2\}$ satisfies $\spadesuit$. 
Let $\mathcal P$ be the pencil spanned by 
$H'_1$ and $H'_2$, that is, $\mathcal P$ is the sublinear 
system of $\Lambda$ spanned by $H'_1$ and $H'_2$. 
Then, for almost all $H'\in \mathcal P$, 
$\{H'\}$ satisfies $\spadesuit$, 
and $$\mathscr J(\varphi|_{H^\dag})=\mathscr 
J(\varphi)|_{H^\dag}$$ holds outside $H^\dag_1\cap H^\dag_2$, 
where $H^\dag=(h^\dag)^*H'$, $H_1^\dag=(h^\dag)^*H'_1$, 
and  $H_2^\dag=(h^\dag)^*H'_2$. 
\end{lem}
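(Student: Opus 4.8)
The plan is to reduce Lemma~\ref{f-lem4.5} to the case $N=1$ already established in Step~\ref{f-step1}, by restricting the pencil $\mathcal P$ to the locus where it becomes base point free and dealing with its base locus separately. Write $s_1,s_2\in H^0(\mathbb P^N,\mathscr O_{\mathbb P^N}(1))$ for the linear forms defining $H'_1$ and $H'_2$, put $\sigma_i:=(h^\dag)^*s_i$, so that $H^\dag_i=(\sigma_i=0)$, and identify $\mathcal P$ with $\mathbb P^1$ so that $H'_\lambda$ ($\lambda=[\lambda_0:\lambda_1]$) is defined by $\lambda_0 s_1+\lambda_1 s_2$ and $H^\dag_\lambda=(\lambda_0\sigma_1+\lambda_1\sigma_2=0)$. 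By condition $\spadesuit$ for $\{H'_1,H'_2\}$ the common zero set $Z:=H^\dag_1\cap H^\dag_2=(\sigma_1=\sigma_2=0)$ is a smooth submanifold of $X^\dag$ of codimension $2$; on $X^\dag\setminus Z$ the two sections do not vanish simultaneously, so $[\sigma_2:-\sigma_1]$ defines a holomorphic map $\rho\colon X^\dag\setminus Z\to\mathbb P^1\simeq\mathcal P$ whose fibre over $\lambda$ is $H^\dag_\lambda\setminus Z$.

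I would then run the argument of Step~\ref{f-step1} with $h^\dag$ replaced by $\rho$ and $\Lambda$ by $\mathcal P$; note that no properness is used there, only Sard's theorem, the Ohsawa--Takegoshi extension theorem, Fubini's theorem, and the finiteness of the associated primes of $\mathscr O_X/\mathscr J(\varphi)$ on the relatively compact region. This yields: for almost all $\lambda\in\mathcal P$, the fibre $H^\dag_\lambda\setminus Z$ is a smooth divisor, it contains no associated prime of $\mathscr O_X/\mathscr J(\varphi)$ on $X^\dag\setminus Z$, and $\mathscr J(\varphi|_{H^\dag_\lambda})=\mathscr J(\varphi)|_{H^\dag_\lambda}$ holds on $X^\dag\setminus Z$, i.e.\ outside $H^\dag_1\cap H^\dag_2$. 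The uniformisation of the ``almost all $c$'' coming from Fubini over the second countable manifold $X^\dag\setminus Z$ is done exactly as in Step~\ref{f-step1} and Example~\ref{f-ex4.4}, by a countable cover and local generators of the coherent sheaf $\mathscr J(\varphi)$.

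It remains to see that $\{H'_\lambda\}$ satisfies $\spadesuit$ for almost all $\lambda$. For a single divisor, $\spadesuit$ amounts to saying that $H^\dag_\lambda$ is a well-defined smooth divisor on $X^\dag$ containing no associated prime of $\mathscr O_X/\mathscr J(\varphi)$, so only the behaviour near $Z$ is still to be checked. Near a point $P$ of $Z$, since $H^\dag_1+H^\dag_2$ is simple normal crossing, $\sigma_1$ and $\sigma_2$ form part of a local coordinate system, hence $H^\dag_\lambda$ is a smooth coordinate hyperplane for \emph{every} $\lambda$; and since $\{f_1,f_2\}$ is a regular sequence for $\mathscr O_{X,P}/\mathscr J(\varphi)_P$, permutability of regular sequences in the Noetherian local ring $\mathscr O_{X,P}$ shows that $\lambda_0 f_1+\lambda_1 f_2$ is a nonzerodivisor on $\mathscr O_{X,P}/\mathscr J(\varphi)_P$ for \emph{every} $\lambda$, so $H^\dag_\lambda$ contains no associated prime of $\mathscr O_X/\mathscr J(\varphi)$ in a neighbourhood of $Z$, again for every $\lambda$. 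Finally, $H^\dag_\lambda$ is well-defined for all $\lambda$ outside a proper, hence meagre in $\mathcal P$, linear locus, because $H^\dag_1$ and $H^\dag_2$ are well-defined. Intersecting all of these almost-everywhere conditions gives the lemma.

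The one genuinely delicate point is the base locus $Z$: since $\mathcal P$ is only a rational map on $X^\dag$, one must both restrict to $X^\dag\setminus Z$ to invoke Step~\ref{f-step1} --- which is harmless, because the claimed equality is only asserted off $Z$ --- and separately prove that condition $\spadesuit$ is inherited by every member $H^\dag_\lambda$ across $Z$; the latter is precisely where permutability of regular sequences is needed. Everything else is a routine reprise of Step~\ref{f-step1}.
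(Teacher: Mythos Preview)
Your argument is correct and follows the same overall strategy as the paper---reduce to the $N=1$ case by viewing the pencil as a map to $\mathbb P^1$---but the implementation differs. The paper resolves the base locus by blowing up: it constructs a proper modification $\widetilde X\to X$ that over $X^\dag$ is the blow-up along $H^\dag_1\cap H^\dag_2$, so that the pencil becomes an honest morphism $\widetilde X\to\mathbb P^1$ fitting into the proper framework $\widetilde X\to S\times\mathbb P^1\to\mathbb P^1$ of Step~\ref{f-step1}; the $\spadesuit$ part is then dispatched by citing Lemma~\ref{f-lem4.3}. You instead simply delete the base locus and run Step~\ref{f-step1} on the open manifold $X^\dag\setminus Z$, observing (correctly) that Sard, Ohsawa--Takegoshi, Fubini, and the finiteness of associated primes on a relatively compact region require only second countability, not properness; you then verify $\spadesuit$ across $Z$ by hand, using that in a Noetherian local ring any nonzero linear combination of a length-two regular sequence is a nonzerodivisor (this follows from the Koszul description of first syzygies together with Krull's intersection theorem). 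Your route is more elementary in that it avoids the blow-up and the appeal to resolution of singularities implicit in the paper's construction of $\widetilde X$; the paper's route has the advantage of staying inside the proper setup so that Step~\ref{f-step1} applies verbatim without rechecking its hypotheses.
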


\begin{proof}[Proof of Lemma \ref{f-lem4.5}]
First, by Lemma \ref{f-lem4.3}, 
for almost all $H'\in \mathcal P$, 
$\{H'\}$ satisfies $\spadesuit$. 
Next, we consider the following commutative diagram. 
$$
\xymatrix{
\widetilde X \ar[d]\ar[r]& S\times \mathbb P_{\mathbb P^1}(\mathcal E)\ar[d]\ar[r]& 
\mathbb P_{\mathbb P^1}(\mathcal E) \ar[d]\ar[r]& \mathbb P^1\\ 
X \ar[r]& S\times \mathbb P^N \ar[r]& \mathbb P^N\ar@{-->}[ur]
}
$$ 
Note that $\mathcal E=\mathscr O^{\oplus N-1}_{\mathbb P^1} \oplus 
\mathscr O_{\mathbb P^1}(1)$, 
$\mathbb P_{\mathbb P^1}(\mathcal E)\to \mathbb P^1$ is the 
blow-up along $H'_1\cap H'_2$, 
and $\mathbb P^N\dashrightarrow \mathbb P^1$ is the projection 
from $H'_1\cap H'_2$. 
In the above diagram, 
$\widetilde X$ is a resolution of the blow-up of $X$ along $h^*H'_1\cap 
h^*H'_2$ 
such that $\widetilde X$ is nothing but the blow-up of $X^\dag$ along  
$H_1^\dag\cap H_2^\dag$ over $X^\dag$ (see, 
for example, \cite{wlo}). 
We apply the argument in Step \ref{f-step1} to $\widetilde X\to 
S\times \mathbb P^1\to \mathbb P^1$ and get the desired 
property, that is, $\mathscr J(\varphi|_{H^\dag})
=\mathscr J(\varphi)|_{H^\dag}$ outside 
$H_1^\dag\cap H_2^\dag$ for 
almost all $H'\in \mathcal P$. 
Note that a point of $\mathbb P^1$ corresponds to a 
hyperplane of $\mathbb P^N$ containing $H'_1\cap H'_2$ by the projection 
$\mathbb P^N\dashrightarrow \mathbb P^1$. 
\end{proof}
\end{step}
\begin{step}\label{f-step3}
In this step, we will prove the following lemma, which is the most difficult 
part of the proof of Theorem \ref{f-thm1.1}. 

\begin{lem}\label{f-lem4.6}
There exists some $H'\in \mathcal G$ such that 
$\{H'\}$ satisfies $\spadesuit$, equivalently, $H'\in \mathcal H$. 
\end{lem}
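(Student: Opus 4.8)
The plan is to argue by induction on $N$, the case $N=1$ being already contained in Step \ref{f-step1}. For the inductive step I would assume Theorem \ref{f-thm1.1} (in particular this lemma) known in dimensions $<N$. The idea is to reduce the problem to a general hyperplane section of $X^\dag$, on which the image in projective space has smaller dimension, and then to transport the resulting good divisor back to $X^\dag$ along a pencil. First I would choose, using Theorem \ref{f-thm3.2} and Lemma \ref{f-lem4.3}, a sufficiently general $H'_0\in\Lambda$ so that $\{H'_0\}$ satisfies $\spadesuit$; moreover, taking $H'_0$ to be a general member of a suitable pencil and applying Lemma \ref{f-lem4.5}, I would arrange in addition that the analytic set $B_0:=\Supp\big(\mathscr J(\varphi)|_{H^\dag_0}/\mathscr J(\varphi|_{H^\dag_0})\big)$ is a proper (indeed codimension $\geq 1$) analytic subset of $H^\dag_0$. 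Then $f|_{H^\dag_0}\colon H^\dag_0\to S$, together with $\varphi|_{H^\dag_0}$, the induced morphism $H^\dag_0\to H'_0\simeq\mathbb P^{N-1}$, and the linear system $\Lambda|_{H'_0}$, fits the set-up of Theorem \ref{f-thm1.1} with $N$ replaced by $N-1$.

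Next I would apply the inductive hypothesis to $H^\dag_0$: the set $\mathcal H^{H^\dag_0}$ associated to this lower-dimensional problem is dense in $\Lambda|_{H'_0}$ even after removing any analytically meagre subset. Discarding a further analytically meagre set of parameters, I would pick $x\in\mathcal H^{H^\dag_0}$ such that $G^\dag:=(h^\dag|_{H^\dag_0})^*x$ is a smooth divisor on $H^\dag_0$, misses the associated primes of $\mathscr O_{H^\dag_0}/\mathscr J(\varphi|_{H^\dag_0})$, satisfies $\mathscr J(\varphi|_{G^\dag})=\mathscr J(\varphi|_{H^\dag_0})|_{G^\dag}$, and in addition $G^\dag\not\subset B_0$. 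The point $x$ corresponds to the pencil $\mathcal P_x\subset\Lambda$ of hyperplanes $H'$ with $H'\cap H'_0=\Pi_x$ (the codimension-$2$ centre), whose base locus on $X^\dag$ is exactly $G^\dag$. Using Lemma \ref{f-lem4.3} to produce two members of $\mathcal P_x$ together with which $\spadesuit$ holds, Lemma \ref{f-lem4.5} gives that for almost all $H'\in\mathcal P_x$ the set $\{H'\}$ satisfies $\spadesuit$ and $\mathscr J(\varphi|_{H^\dag})=\mathscr J(\varphi)|_{H^\dag}$ holds on $X^\dag\setminus G^\dag$. To finish, I would establish the same equality in a neighbourhood of $G^\dag$: on $H^\dag_0\setminus B_0$ one has $\mathscr J(\varphi|_{H^\dag_0})=\mathscr J(\varphi)|_{H^\dag_0}$, hence, restricting to a component $F$ of $G^\dag$ with $F\not\subset B_0$ and using transitivity of restriction, $\mathscr J(\varphi|_F)=\mathscr J(\varphi)|_F$ at least over $F\setminus B_0$; then the ``moreover'' part of Lemma \ref{f-lem4.3} (equivalently Lemma \ref{f-lem4.1}) propagates $\mathscr J(\varphi|_{H^\dag})=\mathscr J(\varphi)|_{H^\dag}$ to a neighbourhood of $F$ for a general $H'\in\mathcal P_x$. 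Combining the two regions should show that a general $H'\in\mathcal P_x$ lies in $\mathcal G$ and satisfies $\spadesuit$, i.e.\ $H'\in\mathcal H$, which is the assertion.

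The hard part will be precisely the behaviour along the base locus $G^\dag$ of the pencil: the inductive hypothesis only controls $\varphi|_{H^\dag_0}$, and this is linked to $\varphi$ itself only through the one-sided Ohsawa--Takegoshi inclusion $\mathscr J(\varphi|_{H^\dag_0})\subseteq\mathscr J(\varphi)|_{H^\dag_0}$, so upgrading it to the equality $\mathscr J(\varphi|_F)=\mathscr J(\varphi)|_F$ on all of a component $F$ of $G^\dag$ (as Lemma \ref{f-lem4.1} requires, not merely over $F\setminus B_0$) needs care. I expect this to be dealt with by choosing $H'_0$, the auxiliary pencil producing it, and $x$ in a mutually compatible way, and, where $B_0$ still meets $G^\dag$, by a descending induction along a deeper flag of hyperplane sections of $H^\dag_0$ on which the equality is forced by a still-lower-dimensional instance of Theorem \ref{f-thm1.1}. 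The remaining ingredients---smoothness and simple normal crossing of the hyperplane sections, the regular-sequence conditions in $\spadesuit$, density of the admissible parameters $x$, and the fact that the various exceptional loci are analytically meagre hence of measure zero---are routine consequences of Theorem \ref{f-thm3.2} and Lemmas \ref{f-lem4.1}, \ref{f-lem4.3}, \ref{f-lem4.5} together with Step \ref{f-step1}.
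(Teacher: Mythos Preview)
Your outline shares the paper's inductive architecture, and you have correctly located the one genuine obstruction: to invoke Lemma \ref{f-lem4.1} (equivalently the ``moreover'' clause of Lemma \ref{f-lem4.3}) you need $\mathscr J(\varphi|_F)=\mathscr J(\varphi)|_F$ on an \emph{entire} irreducible component $F$ of the base locus $G^\dag$, whereas your construction only secures it on $F\setminus B_0$. Since $B_0$ has codimension $\geq 1$ in $H^\dag_0$ while $G^\dag$ has codimension $1$ in $H^\dag_0$, a general $G^\dag$ will still meet $B_0$, so the gap cannot be closed by genericity at this depth. (Your argument is essentially the one the paper uses in Step \ref{f-step4} \emph{after} Lemma \ref{f-lem4.6} is known; there it works because $H'_0\in\mathcal G$ forces $B_0=\emptyset$.)

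The paper's remedy is precisely the ``deeper flag'' you gesture at, but pushed all the way down to a single point of $\mathbb P^N$. After producing $A_1$ (your $H'_0$) from a pencil spanned by general $H'_1,H'_2$, so that the bad locus sits inside $(h^\dag)^{-1}(H'_1\cap H'_2)$, one cuts by further general $A_2,\ldots,A_N$ (using the inductive hypothesis at each stage) down to $Q=A_1\cap\cdots\cap A_N$. The key point is that $H'_1\cap H'_2$ is a proper linear subspace of $A_1$, so a general $Q\in A_1$ avoids it; hence $X_Q^\dag$ is \emph{disjoint} from the bad locus and the equality $\mathscr J(\varphi|_{X_Q^\dag})=\mathscr J(\varphi)|_{X_Q^\dag}$ holds without exception. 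Now Lemma \ref{f-lem4.3} applies with the full $(N-1)$-dimensional family $\Lambda_0=\{H'\in\Lambda\mid Q\in H'\}$ and yields the equality in a neighbourhood of $X_Q^\dag$ for every $H'\in\mathcal F_0$. For the region away from $X_Q^\dag$ the paper does not use a pencil but blows up $X$ along $X_Q^\dag$ and applies the inductive hypothesis to the resulting morphism $\widetilde X\to\mathbb P^{N-1}$; the desired $H'$ is the hyperplane through $Q$ corresponding to a good $B\in|\mathscr O_{\mathbb P^{N-1}}(1)|$. In short, replace your pencil-through-$G^\dag$ step by a projection-from-$Q$ step, with $Q$ cut deep enough that the bad locus is avoided \emph{entirely} rather than merely generically.
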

\begin{proof}[Proof of Lemma \ref{f-lem4.6}]
If $N=1$, then this lemma 
follows from Step \ref{f-step1}. 
From now on, we assume that $N\geq 2$. 
We take two general hyperplanes $H'_1$ and $H'_2$ of $\mathbb P^N$. 
We can choose $H'_1$ and $H'_2$ such that 
$\{H'_1, H'_2\}$ satisfies $\spadesuit$ since $\Lambda$ is free. 
By Lemma \ref{f-lem4.5}, 
we can take a hyperplane $A_1$ of $\mathbb P^N$ such that 
$X_1=h^*A_1$ is smooth, 
$\{A_1\}$ satisfies $\spadesuit$, and the equality 
$$
\mathscr J(\varphi|_{X_1^\dag})
=\mathscr J(\varphi)|_{X_1^\dag}$$ 
holds outside $H_1^\dag\cap H_2^\dag$, 
where $X^\dag_1=X_1\cap X^\dag=(h^\dag)^*A_1$. 
More precisely, if $S^\dag$ is not compact, 
then we take a strictly larger open subset $\widetilde S$ with $
S^\dag\Subset \widetilde S\Subset 
S$ and apply everything to $\widetilde S$ instead of $S^\dag$. 
Then we replace $S$ with $\widetilde S$. 
By this argument, we can make $X_1=h^*A_1$ 
smooth on $X$ (not on $X^\dag$). 
By applying the induction hypothesis to $\Lambda|_{A_1}$, we see that 
$$
\{H'\in \Lambda \, |\, X_1\cap H^\dag \ 
\text{is smooth and $\mathscr J(\varphi|_{X_1\cap 
H^\dag})=\mathscr J(\varphi|_{X_1})|_{X_1\cap H^\dag}$ holds}\}
$$ 
is dense in $\Lambda$ in the classical topology, where $H^\dag=(h^\dag)^*H'$. 

We can take general hyperplanes $A_2, \ldots, A_N$ of $\mathbb P^N$ such that 
$Q=A_1\cap \cdots \cap A_N$, 
$X_Q^\dag:=X_Q\cap X^\dag$ is smooth, where 
$X_Q=h^{-1}(Q)$, and 
the equality 
$$
\mathscr J(\varphi|_{X_Q^\dag})=\mathscr J(\varphi|_{X_1})
|_{X_Q^\dag}
$$ holds 
by using the induction hypothesis repeatedly. 
As we saw above, if necessary, we apply everything 
to a strictly larger open subset $\widetilde S$ 
instead of $S^\dag$ with $S^\dag
\Subset \widetilde S\Subset S$ and replace $S$ with 
$\widetilde S$ in each step. 
Without loss of generality, we may assume that 
$X_Q^\dag\cap H_1^\dag\cap H_2^\dag=\emptyset$. 
Since $\mathscr J(\varphi|_{X_1^\dag})=\mathscr J(\varphi)|_{X_1^\dag}$ 
outside $H_1^\dag\cap H_2^\dag$, 
$$
\mathscr J(\varphi|_{X_1})|_{X_Q^\dag}=
\mathscr J(\varphi|_{X_1^\dag})|_{X_Q^\dag}
=\mathscr J(\varphi)|_{X_Q^\dag}
$$
holds. Therefore, we obtain 
$$
\mathscr J(\varphi|_{X_Q^\dag})=\mathscr J(\varphi|_{X_1})
|_{X_Q^\dag}=\mathscr J(\varphi)|_{X_Q^\dag}. 
$$ 
Of course, we can choose $A_2, A_3, \ldots, A_N$ such that 
$$
\{A_1, A_2, 
\ldots, A_N\}
$$
satisfies $\spadesuit$ with the aid of 
Lemma \ref{f-lem3.6} (see also 
Remark \ref{f-rem3.7}) since $\Lambda$ is a free linear system. 
We put 
$$
\Lambda_0=\{A\, |\, Q\in A\in |\mathscr O_{\mathbb P^N}(1)|
\}\subset \Lambda, 
$$ equivalently, $\Lambda_0$ is the sublinear system 
of $\Lambda$ spanned by 
$\{A_1, \ldots, A_N\}$.  
Then $$
\mathcal F_0=\{H'\in \Lambda_0 \, |\, \{H', A_2, \ldots, A_N\} 
\ \text{satisfies} \ \spadesuit\}
$$ 
is non-empty by $A_1\in \mathcal F_0$ 
and $\Lambda_0\setminus \mathcal F_0$ is analytically 
meagre by Lemma \ref{f-lem4.3}. 
Thus, by Lemma \ref{f-lem4.3}, we have: 
\begin{claim}\label{f-claim} 
The equality $\mathscr J(\varphi|_{X_g^\dag})=\mathscr J(\varphi)
|_{X_g^\dag}$ holds in a neighborhood 
of $X_Q^\dag$ for every $A_g\in \mathcal F_0$, where 
$X_g:=h^*A_g$. 
\end{claim}
Let $\pi:\widetilde X\to X$ be a proper bimeromorphic 
morphism from a complex manifold $\widetilde X$ 
such that $\pi:\widetilde X\to X$ is nothing but the blow-up 
of $X^\dag$ along $X_Q^\dag$ over $X^\dag$ 
(see, for example, \cite{wlo}). 
Then we have the following commutative diagram. 
$$
\xymatrix{
\widetilde X\ar[r]\ar[d]_-\pi & S\times \mathbb P(\mathcal E) 
\ar[r]\ar[d]& \mathbb P(\mathcal E)\ar[r] \ar[d]
& \mathbb P^{N-1} \\ 
X \ar[r]& S\times \mathbb P^N \ar[r]& \mathbb P^N\ar@{-->}[ur]_-{p_Q} &
}
$$
Of course, $p_Q:\mathbb P^N\dashrightarrow \mathbb P^{N-1}$ is 
the linear projection 
from $Q$ and $\mathbb P(\mathcal E)$ is the 
blow-up of $\mathbb P^N$ at $Q$, where 
$\mathbb P(\mathcal E)
=\mathbb P_{\mathbb P^{N-1}}(\mathscr O_{\mathbb P^{N-1}}
\oplus \mathscr O_{\mathbb P^{N-1}}(1))$. 
We consider the following commutative diagram. 
$$
\xymatrix{
\widetilde X\ar[drr]^-{\widetilde h}\ar[dd]_-{\widetilde f}\ar[dr]_-{\widetilde{g}} &\\ 
&S\times \mathbb P^{N-1} \ar[dl]^-{p_1}\ar[r]_-{p_2}& \mathbb P^{N-1}\\
S&
}
$$
We put $\widetilde X^\dag=\widetilde f^{-1}(S^\dag)$. 
By induction on $N$, we can take a general hyperplane 
$B$ of $\mathbb P^{N-1}$ such that 
$\widetilde h^*B\cap \widetilde X^\dag$ is smooth and 
that 
\begin{equation}\label{f-eq41}
\mathscr J(\pi^*\varphi|_{\widetilde h^*B\cap \widetilde X^\dag})
=\mathscr J(\pi^*\varphi)|_{\widetilde h^*B\cap 
\widetilde X^\dag}
\end{equation}
holds. 
Let $H'$ be the hyperplane of $\mathbb P^N$ spanned 
by $Q$ and $B$. 
Note that, by induction on $N$,  we can choose $B$ such that 
$$
\{A_2, \ldots, A_N, H'\}
$$ 
satisfies $\spadesuit$ since $\Lambda_0\setminus 
\mathcal F_0$ is analytically meagre. 
Therefore, we obtain that the equality 
$$
\mathscr J(\varphi|_{H^\dag})=\mathscr J(\varphi)|_{H^\dag} 
$$ 
holds by Claim and \eqref{f-eq41}, where $H^\dag=(h^\dag)^*H'$ as 
usual. 
More precisely, \eqref{f-eq41} implies that the equality 
$$
\mathscr J(\varphi|_{H^\dag})=\mathscr J(\varphi)|_{H^\dag}
$$ 
holds outside $X_Q^\dag$ and 
Claim implies that the equality 
$$
\mathscr J(\varphi|_{H^\dag})=\mathscr J(\varphi)|_{H^\dag}
$$ 
holds in a neighborhood of $X_Q^\dag$. 
Anyway, this $H'$ is a desired divisor. 
\end{proof}
\end{step}
\begin{step}\label{f-step4}
In this step, we will see that 
$\mathcal G\setminus \mathcal S$ 
is dense in $\Lambda$ in the classical topology 
for any analytically meagre subset $\mathcal S$ of $\Lambda$. 

By Step \ref{f-step1}, 
we may assume that $N\geq 2$. 
By Lemma \ref{f-lem4.6}, 
we can take a member $H'_0\in \mathcal G$ such that 
$\{H'_0\}$ satisfies $\spadesuit$. 
By the same argument as before, if $S^\dag$ is not compact, 
then we take a strictly larger open subset $\widetilde S$ with 
$S^\dag\Subset \widetilde S\Subset S$. 
Then we apply everything to $\widetilde S$ instead of $S^\dag$. 
By replacing $S$ with $\widetilde S$, 
we may assume that $h^*H'_0$ is smooth on $X$. 
By applying the induction hypothesis to $\Lambda|_{H'_0}$, 
we see that 
$$
\mathcal G':=\{ H'\in \Lambda \, |\, H_0^\dag\cap H^\dag\ \text{is smooth 
and $\mathscr J(\varphi|_{H_0^\dag\cap H^\dag})
=\mathscr J(\varphi|_{H_0^\dag})|_{H_0^\dag\cap H^\dag}$ holds}\}
$$ 
is dense in $\Lambda$ in the classical topology, 
where $H_0^\dag=(h^\dag)^*H'_0$ and 
$H^\dag=(h^\dag)^*H'$ as usual. 
Since $\Lambda$ is free, 
$$
\mathcal F:=\{ H'\in \Lambda \, |\, \{H'_0, H'\} \ \text{satisfies $\spadesuit$}\}
$$ 
is non-empty and $\Lambda\setminus \mathcal F$ is analytically meagre. 
Therefore, we see that  
$$
\mathcal G'':=\{ H'\in \mathcal G' \, |\, \{H'_0, H'\} \ 
\text{satisfies $\spadesuit$}\}
$$ 
is also dense in $\Lambda$ in the classical topology 
with the aid of Lemma \ref{f-lem3.6}. 
We note that 
\begin{equation}\label{f-eq42}
\mathscr J(\varphi|_{H_0^\dag\cap H_1^\dag})=\mathscr J(\varphi |_
{H_0^\dag})|_{H_0^\dag\cap H_1^\dag}
=\mathscr J(\varphi)|_{H_0^\dag\cap H_1^\dag}
\end{equation} 
with $H_1^\dag=(h^\dag)^*H'_1$ 
for every $H'_1\in \mathcal G'$ since 
$$
\mathscr J(\varphi|_{H_0^\dag})=\mathscr J(\varphi)|_{H_0^\dag}. 
$$ 
Here, we used the fact that $H'_0\in \mathcal G$. 
We consider the pencil $\mathcal P$ spanned by 
$H'_0$ and $H'_1\in \mathcal G''$, that is, 
the sublinear system of $\Lambda$ spanned by 
$H'_0$ and $H'_1$. 
By Lemma \ref{f-lem4.5}, for almost all $H'\in \mathcal P$, $H^\dag=
(h^\dag)^*H'$ is smooth and 
the equality 
$$
\mathscr J(\varphi|_{H^\dag})=\mathscr J(\varphi)|_{H^\dag}
$$ 
holds outside $H_0^\dag\cap H^\dag=H_0^\dag\cap H_1^\dag$. 
For almost all $H'\in \mathcal P$, $\{H'_0, H'\}$ satisfies $\spadesuit$ by 
Lemma \ref{f-lem4.3}. 
Therefore, 
the equality 
$$
\mathscr J(\varphi|_{H^\dag})=\mathscr J(\varphi)|_{H^\dag}
$$
holds in a neighborhood of $H_0^\dag\cap H^\dag=
H_0^\dag\cap H_1^\dag$ for almost all $H'\in \mathcal P$ by 
Lemma \ref{f-lem4.3} and \eqref{f-eq42}. 
Therefore, for almost all $H'\in \mathcal P$, $H^\dag$ is smooth and 
the equality 
$$
\mathscr J(\varphi|_{H^\dag})=\mathscr J(\varphi)|_{H^\dag}
$$
holds. 
This means that almost 
all members of $\mathcal P$ are contained in $\mathcal G$. 

Let $P$ be a point of $\mathbb P^N\simeq \Lambda$ corresponding 
to $H'_0$. 
We put $\Sigma=\Lambda\setminus \mathcal F$, 
$\mathcal G_{N-1}=\mathcal G'|_{H'_0}\subset \Lambda|_{H'_0}$, 
and $\mathcal G_N=\mathcal G$. 
Then we can apply Lemma \ref{f-lem3.6} (see also Remark \ref{f-rem3.7}). 
Therefore, $\mathcal G\setminus \mathcal S$ is 
dense in $\Lambda$ in the classical 
topology 
for any analytically meagre subset $\mathcal S$ of $\Lambda$. 
\end{step} 

\begin{step}\label{f-step5}
In this step, we will see that $\mathcal H$ is dense in $\Lambda$ in the classical 
topology. 

We put 
$$
\left\{H'\in \Lambda \, \left|
\begin{array}{l}
\mbox{$H^\dag:=(h^\dag)^*H'$ is well-defined and contains}\\ 
\mbox{no associated primes of $\mathscr O_X/\mathscr J(\varphi)$ 
on $X^\dag$}
\end{array}
\right.
\right\}. 
$$
Then it contains a non-empty Zariski open subset of $\Lambda$. 
Note that $\Lambda$ is free and the number of 
the associated primes of $\mathscr O_X/\mathscr J(\varphi)$ on $X^\dag$ 
is finite. 
Therefore, by Step \ref{f-step4}, $\mathcal H$ is dense in $\Lambda$ in the classical 
topology because $\mathcal G\setminus \mathcal H$ is contained in 
an analytically meagre subset of $\Lambda$. 
\end{step}

\begin{step}\label{f-step6}
Let $H'$ be a member of $\mathcal H$. 
We consider the following big commutative diagram. 
\begin{equation*}
\xymatrix{
 & 0\ar[d] & 0\ar[d]&\\
0 \ar[r]& \mathscr J(\varphi|_{X^\dag}) \otimes \mathscr O_{X^\dag}(-H^\dag)
\ar[d]
\ar[r]^{\quad \quad \quad \alpha}
&\mathscr J(\varphi|_{X^\dag})\ar[d]\ar[r]& \Coker \alpha 
\ar[r]\ar[d]^{\beta}& 0 \\ 
0 \ar[r]& \mathscr O_{X^\dag}(-H^\dag)\ar[d]
\ar[r]
&\mathscr O_{X^\dag} \ar[d]\ar[r]& \mathscr O_{H^\dag}\ar[r]& 0 \\
&\left(\mathscr O_{X^\dag}/\mathscr J(\varphi|_{X^\dag})\right)\otimes 
\mathscr O_{X^\dag}(-H^\dag) \ar[r]^{\quad \quad \quad \gamma}
\ar[d]& \mathscr O_{X^\dag}/\mathscr J(\varphi|_{X^\dag})\ar[d]&\\
& 0& 0&
}
\end{equation*} 
Of course, $H^\dag=(h^\dag)^*H'$ in the above diagram. 
Since $H'\in \mathcal H$, $\gamma$ is injective. 
Therefore, $\beta$ is also injective by the snake lemma. 
Thus we obtain that $$\Coker \alpha=\mathscr J(\varphi|_{X^\dag})|_{H^\dag}$$ 
by definition. 
Then we have the following desired short exact sequence 
$$
0\to \mathscr J(\varphi|_{X^\dag})\otimes 
\mathscr O_{X^\dag}(-H^\dag)\to \mathscr J(\varphi|_{X^\dag})
\to \mathscr J(\varphi|_{H^\dag})\to 0
$$ 
because $\mathscr J(\varphi|_{H^\dag})=\mathscr J(\varphi|_{X^\dag})|_{H^\dag}$ 
holds for $H'\in \mathcal H$. 
\end{step}
We complete the proof of Theorem \ref{f-thm1.1}. 
\end{proof}

We close this section with a remark on the set $\Lambda \setminus \mathcal G$. 

\begin{rem}[cf.~Question \ref{f-question1.2}]\label{f-rem4.7}
Theorem \ref{f-thm1.1} says that $\mathcal G$ is dense in $\Lambda$ 
in the classical topology. 
However, the proof of Theorem \ref{f-thm1.1} 
gives no information on the set $\Lambda\setminus \mathcal G$ in $\Lambda$ 
($\simeq \mathbb P^N$). 
This is because we use Lemma \ref{f-lem3.6} for 
induction on $N$. 
We do not know whether 
$\Lambda\setminus \mathcal G$ has measure zero or not 
(see \cite[Examples 3.11 and 3.12]{fujino-matsumura}). 
\end{rem}

\section{Proof of Theorem \ref{f-thm1.4}}\label{f-sec5}

In this section, we prove Theorem \ref{f-thm1.4} as an 
application of Theorems \ref{f-thm1.1} and \ref{f-thm1.3}. 

\begin{proof}[Proof of Theorem \ref{f-thm1.4}]
We take an arbitrary point $z\in Z$. 
Let us prove 
\begin{equation*}
R^i\pi_*R^jf_*(\omega_X\otimes F\otimes 
\mathscr J(h))=0
\end{equation*} 
for every $i>0$ and $j$ in a neighborhood 
of $z$ by induction on $\dim \pi^{-1}(z)$. 
Without loss of generality, we may assume that 
$f_*\mathscr O_X\simeq \mathscr O_Y$ and $\pi_*\mathscr O_Y\simeq 
\mathscr O_Z$ by taking the Stein factorizations of $f$ and $\pi$. 
Since $\pi\circ f$ is locally K\"ahler (see, for example, 
\cite[Proposition 6.2 (ii)]{takegoshi}), we may assume that $X$ is 
K\"ahler by shrinking $Z$ around $z$. 
If $\dim \pi^{-1}(z)=0$, then 
$\pi:Y\to Z$ is finite over a neighborhood of $z$. 
In this case, it is obvious that $R^i\pi_*R^jf_*(\omega_X\otimes 
F\otimes \mathscr J(h))=0$ holds for every $i>0$ and $j$ in a neighborhood 
of $z$. 
From now on, we assume that $\dim \pi^{-1}(z)>0$. 
By replacing $H$ with $H^{\otimes m}$ for some sufficiently large positive 
integer $m$, 
we may assume that $H$ is $\pi$-very ample and 
\begin{equation}\label{f-eq51}
R^i\pi_*(H\otimes R^jf_*(\omega_X\otimes F \otimes \mathscr J(h)))=0 
\end{equation}
for every $i>0$ and $j$ (see, for 
example \cite[Chapter IV, Theorem 2.1 (B)]{banica-s}). 
We may further assume that there exists the following commutative diagram 
\begin{equation*}
\xymatrix{
X\ar[d]_-f \ar[dr]&&\\ 
Y\ar[d]_-\pi \ar@{^{(}->}[r]& Z\times \mathbb P^N \ar[dl]^-{p_1}
\ar[r]_-{p_2}& \mathbb P^N \\ 
Z &&
}
\end{equation*}
such that $H\simeq (p^*_2\mathscr O_{\mathbb P^N}(1))|_Y$ by 
shrinking $Z$ around $z$ suitably (see, 
for example \cite[Chapter IV, \S 2]{banica-s}). 
By Theorem \ref{f-thm1.1}, we can take 
a general member $A'$ of $|\mathscr O_{\mathbb P^N}(1)|$ such that 
$A_Y:=(p_2^*A')|_Y$ contains no associated primes 
of $R^jf_*(\omega_X\otimes 
F\otimes \mathscr J(\varphi))$ for every $j$, 
$A_Y$ contains no irreducible components of $\pi^{-1}(z)$, 
$A$ is smooth, where $A=f^*A_Y$, and 
$$
0\to \mathscr J(h)\otimes \mathcal O_X(-A)\to 
\mathscr J(h)\to \mathscr J(h|_{A})\to 0
$$ 
is exact after shrinking $Z$ around $z$ suitably. 
Therefore, by adjunction, 
$$
0\to \omega_X\otimes F\otimes \mathscr J (h)\to 
\omega_X\otimes F\otimes \mathscr J(h)\otimes \mathcal O_X(A)
\to \omega_{A}\otimes F|_{A}\otimes \mathscr J(h|_{A})\to 0
$$
is exact. 
Thus, we see that 
\begin{equation*}
\begin{split}
0&\to R^jf_*(\omega_X\otimes F\otimes \mathscr J(h)) 
\to R^jf_*(\omega_X\otimes F\otimes \mathscr J(h)\otimes \mathscr O_X(A))
\\
&\to R^jf_*(\omega_{A}\otimes F|_{A}\otimes \mathscr J(h|_{A}))
\to 0 
\end{split}
\end{equation*}
is exact for every $j$ since $A_Y$ contains no associated 
primes of $R^jf_*(\omega_X\otimes F\otimes \mathscr J(h))$ for 
every $j$. 
We note that $R^jf_*(\omega _{A} \otimes F|_{A} 
\otimes \mathscr J(h|_{A}))$ is $\pi_*$-acyclic in a neighborhood 
of $z$ by induction on $\dim \pi^{-1}(z)$ and 
that 
$R^jf_*(\omega _X\otimes F\otimes \mathscr J(h)\otimes 
\mathscr O_X(A))$ is $\pi_*$-acyclic by the above assumption 
(see \eqref{f-eq51}). 
We consider 
the long exact sequence: 
\begin{equation*}
\begin{split}
\cdots&\to R^i\pi_*R^jf_*(\omega_X\otimes F\otimes \mathscr J(h)) 
\to R^i\pi_*R^jf_*(\omega_X
\otimes F\otimes \mathscr J(h)\otimes \mathscr O_X(A))
\\
&\to R^i\pi_*R^jf_*(\omega_{A}\otimes F|_{A}\otimes \mathscr J(h|_{A}))
\to \cdots.  
\end{split}
\end{equation*}
Thus, if we shrink $Z$ around $z$ suitably, then 
we have $E^{i, j}_2=0$ for every $i\geq 2$ and $j$ in the 
following commutative 
diagram of spectral sequences. 
\begin{equation*}
\xymatrix{
E^{i, j}_2=R^i\pi_*R^jf_*(\omega_X\otimes F\otimes 
\mathscr J(h)) \ar@{=>}[r]\ar[d]_-{\varphi^{i,j}} 
& R^{i+j} (\pi\circ f)_*(\omega_X
\otimes F\otimes \mathscr J(h))\ar[d]^-{\varphi^{i+j}} \\ 
\overline{E}^{i, j}_2=R^i\pi_*R^jf_*(\omega_X\otimes F\otimes 
\mathscr J(h)\otimes 
\mathscr O_X(A)) \ar@{=>}[r] & R^{i+j} (\pi\circ f)_*(\omega_X
\otimes F\otimes \mathscr J(h)\otimes \mathscr O_X(A))
}
\end{equation*} 
We note that $\varphi^{i+j}$ is injective by Theorem \ref{f-thm1.3}. 
We also note that 
\begin{equation*}
\xymatrix{
E^{1, j}_2\ar[r]^-{\alpha} & R^{1+j} (\pi\circ 
f)_*(\omega_X\otimes F\otimes \mathscr J(h))
}
\end{equation*} 
is injective for every $j$ by the fact that $E^{i,j}_2=0$ for 
every $i\geq 2$ and $j$.  
By the above assumption (see \eqref{f-eq51}), we have  
$\overline {E}^{1,j}_2=0$ for every $j$. 
Therefore, we obtain $E^{1,j}_2=0$ for every $j$ since 
the injection 
\begin{equation*}
\xymatrix{
E^{1,j}_2\ar[r]^-{\alpha} &
R^{1+j}(\pi\circ f)_*(\omega_X\otimes F\otimes \mathscr J(h)) 
\ar[r]^-{\varphi^{1+j}}&
R^{1+j} (\pi\circ f)_* (\omega _X\otimes F\otimes 
\mathscr J(h)\otimes \mathscr O_X(A)) 
}
\end{equation*} 
factors through $\overline {E}^{1, j}_2=0$. 
This implies that 
$R^i\pi_*R^jf_*(\omega_X\otimes F\otimes \mathscr J(h))=0$ for 
every $i>0$ and $j$ in a neighborhood 
of an arbitrary point $z\in Z$. 
This means that 
\begin{equation*}
R^i\pi_*R^jf_*(\omega_X\otimes F\otimes \mathscr J(h))=0
\end{equation*} 
for 
every $i>0$ and $j$.
\end{proof} 

\section{Proof of Corollary \ref{f-cor1.5}}\label{f-sec6}

By using the strong openness in \cite{guan-zhou}, we can prove 
Corollary \ref{f-cor1.5} as an easy application of 
Theorem \ref{f-thm1.4}. 

Let us prepare a lemma suitable for our application. 

\begin{lem}[{cf.~\cite[Theorem 1.1]{guan-zhou}}]\label{f-lem6.1}
Let $X$ be a complex manifold and let $\varphi$ and $\psi$ be 
quasi-plurisubharmonic functions on $X$. 
Let $X^\dag$ be a relatively compact open subset 
of $X$. 
Then there exists a small positive number $\varepsilon$ such that 
$$
\mathscr J(\varphi)=\mathscr J(\varphi+\varepsilon \psi)
$$ 
holds on $X^\dag$. 
\end{lem}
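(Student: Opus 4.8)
The plan is to reduce the statement to the strong openness theorem of Guan--Zhou \cite{guan-zhou} applied pointwise, and then upgrade the pointwise conclusion to a uniform one over the relatively compact set $X^\dag$ by a compactness argument together with the coherence and monotonicity of multiplier ideal sheaves. First I would fix a point $x\in \overline{X^\dag}$ and work in a small coordinate neighborhood $U_x$ on which both $\varphi$ and $\psi$ are (up to a smooth additive term) plurisubharmonic and bounded above; shrinking further, $\psi$ is bounded on $U_x$, say $|\psi|\le C_x$ there. Since $\psi$ is bounded below on $U_x$, for every $\varepsilon>0$ we have the inclusion $\mathscr J(\varphi+\varepsilon\psi)\subset \mathscr J(\varphi+\varepsilon\psi')$ where $\psi'$ is any quasi-psh function $\le \psi$; more to the point, $\varphi+\varepsilon\psi \ge \varphi - \varepsilon C_x$ gives $\mathscr J(\varphi)=\mathscr J(\varphi-\varepsilon C_x)\subset \mathscr J(\varphi+\varepsilon\psi)$ on $U_x$, so one inclusion is free on a neighborhood of each point.

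For the reverse inclusion, the key input is strong openness: $\mathscr J(\varphi)=\bigcup_{\delta>0}\mathscr J((1+\delta)\varphi)$, equivalently the integrability locus of $e^{-2\varphi}$ is open in the sense that $|g|^2 e^{-2\varphi}\in L^1_{\mathrm{loc}}$ implies $|g|^2 e^{-2(1+\delta)\varphi}\in L^1_{\mathrm{loc}}$ for some $\delta>0$. I would combine this with the boundedness of $\psi$ via the Hölder inequality: on $U_x$, writing $e^{-2\varphi-2\varepsilon\psi}=e^{-2(1+\delta)\varphi}\cdot e^{2\delta\varphi}\cdot e^{-2\varepsilon\psi}$ and choosing conjugate exponents $p=(1+\delta)/\delta$ and $q=1+\delta$, one estimates $\int_K |g|^2 e^{-2\varphi-2\varepsilon\psi}$ by a product of $\left(\int_K |g|^{2q} e^{-2(1+\delta)\varphi}\right)^{1/q}$ — finite after absorbing part of $|g|$ into the other factor, using that $g$ is bounded on compacts — against a factor involving $e^{2\delta p\varphi - 2\varepsilon p\psi}$, which is integrable once $\varepsilon$ is small because $\varphi$ is bounded above and $\psi$ is bounded on $U_x$. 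Thus for each $x$ there are a neighborhood $U_x$ and an $\varepsilon_x>0$ with $\mathscr J(\varphi+\varepsilon_x\psi)=\mathscr J(\varphi)$ on $U_x$, hence the same equality on $U_x$ for all $0<\varepsilon\le \varepsilon_x$ by the monotonicity $\mathscr J(\varphi)\subset \mathscr J(\varphi+\varepsilon\psi)\subset \mathscr J(\varphi+\varepsilon_x\psi)$.

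Finally I would globalize: cover the compact set $\overline{X^\dag}$ by finitely many such $U_{x_1},\dots,U_{x_m}$ and set $\varepsilon=\min_i \varepsilon_{x_i}>0$. Since the equality $\mathscr J(\varphi)=\mathscr J(\varphi+\varepsilon\psi)$ of coherent ideal sheaves holds on each $U_{x_i}$, it holds on their union, in particular on $X^\dag$, which is the assertion. The main obstacle is the Hölder-inequality estimate in the reverse inclusion — specifically, being careful that the exponent $\delta$ supplied by strong openness can be taken uniform enough (it suffices to have it at the given point, since we then work locally) and that the section $g$ is handled correctly when splitting the integrand; everything else is a routine compactness-and-coherence packaging.
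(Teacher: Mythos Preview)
Your argument rests on a false premise: you assume that by shrinking $U_x$ you can arrange $|\psi|\le C_x$ there, i.e.\ that $\psi$ is locally bounded. A quasi-plurisubharmonic function is bounded \emph{above} on compacts (upper semicontinuity), but not below---think of $\psi=\log|z|$ near the origin, and note that in the intended application (Corollary~\ref{f-cor1.5}) $\psi$ is exactly of this type, coming from the section of a line bundle and blowing down along a divisor. Once the lower bound on $\psi$ is dropped, your identification of the ``free'' inclusion is reversed: what is actually trivial is $\mathscr J(\varphi+\varepsilon\psi)\subset \mathscr J(\varphi)$, since $e^{2\varepsilon\psi}$ is locally bounded above; the inclusion $\mathscr J(\varphi)\subset \mathscr J(\varphi+\varepsilon\psi)$ is the one requiring work. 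If $\psi$ really were bounded the lemma would be trivial and would not need strong openness at all.

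Your H\"older step inherits the same defect: the residual factor you must control is essentially $\int e^{-c\varepsilon\psi}$, and boundedness of $\psi$ is precisely what you invoke to make it finite. The fix---and this is what the paper does---is to replace the boundedness hypothesis by Skoda's integrability theorem: for any quasi-psh $\psi$ and any point, there is a neighborhood on which $\int e^{-2q\varepsilon\psi}<\infty$ once $\varepsilon$ is small. The paper applies H\"older with respect to the measure $|f|^2\,d\lambda$,
\[
\int |f|^2 e^{-2(\varphi+\varepsilon\psi)} \le \Bigl(\int |f|^2 e^{-2p\varphi}\Bigr)^{1/p}\Bigl(\int |f|^2 e^{-2q\varepsilon\psi}\Bigr)^{1/q},
\]
uses strong openness for some $p>1$ on the first factor, and Skoda on the second. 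Your compactness-and-coherence packaging at the end is fine once this local step is repaired; note however that the monotonicity chain $\mathscr J(\varphi)\subset \mathscr J(\varphi+\varepsilon\psi)\subset \mathscr J(\varphi+\varepsilon_x\psi)$ for $0<\varepsilon\le\varepsilon_x$ does not hold as written---to pass to smaller $\varepsilon$ either normalize $\psi\le 0$ locally, or observe that the Skoda bound automatically persists for all smaller $\varepsilon$.
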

\begin{proof}
By definition, it is obvious that the natural inclusion  
$$
\mathscr J(\varphi)\supset \mathscr J(\varphi+\varepsilon \psi)
$$ 
holds since $\varepsilon$ is positive. 

Let us see the problem locally. 
Let $\Delta^n=\{(z_1, \ldots, z_n)\, |\, |z_1|<1, \cdots, |z_n|<1\}$ 
be the unit polydisc and let $\varphi$ and $\psi$ be plurisubharmonic 
functions on $\Delta^n$. 
Let $f_1, \ldots, f_k$ be holomorphic functions 
on $\Delta^n$ such that 
$$
\int _{\Delta^n} |f_i|^2e^{-2\varphi} d\lambda_n<\infty
$$ 
for every $i$, where $d\lambda_n$ is the Lebesgue measure 
on $\mathbb C^n$, and that $\{f_1, \ldots, f_k\}$ 
generates $\mathscr J(\varphi)_0$, the stalk of $\mathscr J(\varphi)$ at $0\in \Delta^n$. 
By \cite[Theorem 1.1]{guan-zhou}, 
we can take $r\in (0, 1)$ and $p>1$ such that 
$$
\int _{\Delta^n_r}|f_i|^2e^{-2p\varphi}d\lambda_n<\infty
$$ 
for every $i$, where $\Delta^n_r=\{(z_1, \ldots, z_n) \, |\, 
|z_1|<r, \cdots, |z_n|<r\}$. 
We put $q=\frac{p}{p-1}>0$. 
Then, by the H\"older inequality, we have 
$$
\int _{\Delta^n_r}|f_i|^2e^{-2(\varphi+\varepsilon \psi)}d\lambda_n 
\leq 
\left(\int _{\Delta^n_r}|f_i|^2e^{-2p\varphi}d\lambda_n\right)^{1/p}
\left(\int _{\Delta^n_r}|f_i|^2e^{-2q\varepsilon \psi}d\lambda_n\right)^{1/q}. 
$$
By replacing $r$ with a smaller positive number, 
we can take  
$\varepsilon>0$ such that 
$$
\int_{\Delta^n_r} e^{-2q\varepsilon \psi}d\lambda_n<\infty
$$
by Skoda's theorem (see, for example, \cite[(5.6) Lemma]{demailly}). 
Then we obtain 
$$
\int _{\Delta^n_r}|f_i|^2e^{-2(\varphi+\varepsilon \psi)}d\lambda_n<\infty. 
$$
This implies that $f_i\in \mathscr J(\varphi+\varepsilon \psi)_0$ for 
every $i$. 
Therefore, we obtain the inclusion 
$$
\mathscr J(\varphi)_0\subset \mathscr J(\varphi+\varepsilon \psi)_0.  
$$ 
Then the equality 
$$
\mathscr J(\varphi)_0=\mathscr J(\varphi+\varepsilon \psi)_0
$$ 
holds. So, the equality 
$$
\mathscr J(\varphi)=\mathscr J(\varphi+\varepsilon \psi)
$$ 
holds in a neighborhood of $0\in \Delta^n$ since 
$\mathscr J(\varphi)$ and $\mathscr J(\varphi+\varepsilon \psi)$ are both 
coherent. 

Thus, we can take $\varepsilon >0$ such that 
$$
\mathscr J(\varphi)=\mathscr J(\varphi+\varepsilon \psi)
$$ 
holds on $X^\dag$ since $X^\dag$ is a relatively compact open subset of $X$. 
\end{proof}

Let us prove Corollary \ref{f-cor1.5}. 

\begin{proof}[Proof of Corollary \ref{f-cor1.5}]
It is sufficient to prove that 
$$
R^i\pi_*(M\otimes R^jf_*(\omega_X\otimes F\otimes \mathscr J(h)))=0
$$ 
holds for every $i>0$ and $j$ in a neighborhood of 
any fixed point $z\in Z$. 
By shrinking $Z$ around $z$, we may assume that 
$X$ is K\"ahler since $\pi\circ f$ is locally K\"ahler (see, 
for example, \cite[Proposition 6.2 (ii)]{takegoshi}). 
Without loss of generality, we may assume that 
$Z$ is Stein. 
By shrinking $Z$ around $z$, we may further assume that 
there exists the following commutative diagram 
since $\pi:Y\to Z$ is locally projective. 
$$
\xymatrix{
X \ar[drr]^-h\ar[d]_-f\ar[dr]& & \\ 
Y \ar[d]_-\pi\ar@{^{(}->}[r]& Z\times \mathbb P^N
\ar[dl]^-{p_1}\ar[r]_-{p_2} 
& \mathbb P^N\\ 
Z & & 
}
$$
Then we can take a sufficiently large and divisible positive integer 
$m$ such that 
$$
M^{\otimes m} \simeq H\otimes \mathscr O_Y(E)
$$
where $H\simeq (p^*_2\mathscr O_{\mathbb P^N}(1))|_Y$ and 
$E$ is an effective Cartier divisor on $Y$ by Kodaira's lemma. 
Then we obtain 
$$
M^{\otimes (2m+k)}\simeq (M^{\otimes k}\otimes H)\otimes 
H\otimes \mathscr O_Y(2E). 
$$ 
We note that $M^{\otimes k}\otimes H$ is $\pi$-ample 
for every positive integer $k$ since $M$ is $\pi$-nef.
Since $f^*H\simeq 
h^*\mathscr O_{\mathbb P^N}(1)$, we can construct 
a smooth hermitian metric $g$ on $f^*H$ such that 
$\sqrt{-1}\Theta_g(f^*H)\geq 0$. 
Similarly, $f^*(M^{\otimes k}\otimes H)$ has a smooth 
hermitian metric $g_1$ such that 
$\sqrt{-1}\Theta_{g_1}(f^*(M^{\otimes k}\otimes H))\geq 0$ after 
shrinking $Z$ around $z$ suitably because 
$M^{\otimes k}\otimes H$ is $\pi$-ample. 
Let $s$ be the canonical section of $\mathscr O_X(f^*E)$, that is, 
$s\in \Gamma (X, \mathscr O_X(f^*E))$ with 
$(s=0)=f^*E$. 
Let $g_2$ be any smooth hermitian metric on $\mathscr O_X(f^*E)$. 
We put 
$$
g_3=\frac{g_2}{|s|^2_{g_2}}. 
$$ 
Then $g_3$ is a singular hermitian metric on $\mathscr O_X(f^*E)$ such that 
$\sqrt{-1} \Theta_{g_3}(\mathscr O_X(f^*E))\geq 0$ and that 
$g_3$ is smooth outside $\Supp f^*E$. 
We put 
$$
h'=(g_1\cdot g\cdot g^2_3)^{\frac{1}{2m+k}}. 
$$ 
Then $h'$ is a singular hermitian metric on $f^*M$, which is smooth 
outside $\Supp f^*E$. 
By construction, 
\begin{equation*} 
\sqrt{-1} \Theta_g(f^*H)\geq 0 \quad \text{and} \quad 
\sqrt{-1}\Theta_{h'}(f^*M)-\varepsilon \sqrt{-1}\Theta _g (f^*H)\geq 0
\end{equation*} 
for some $\varepsilon >0$. 
If $k$ is sufficiently large, then we can make 
$h'$ satisfy $\mathscr J(hh')=\mathscr J(h)$ by 
Lemma \ref{f-lem6.1}. 
We note that we can freely shrink $Z$ around $z$ if necessary. 
Anyway, this means that 
\begin{equation*} 
\sqrt{-1} \Theta_g(f^*H)\geq 0 \quad \text{and} \quad
\sqrt{-1}\Theta_{hh'}(F\otimes 
f^*M)-\varepsilon \sqrt{-1}\Theta _g (f^*H)\geq 0
\end{equation*} 
for some $\varepsilon>0$ such that the equality $\mathscr J(hh')
=\mathscr J(h)$ holds. 
By applying Theorem \ref{f-thm1.4}, 
we obtain that 
\begin{equation*}
R^i\pi_*(M\otimes R^jf_*(\omega_X\otimes F\otimes \mathscr J(h)))
=R^i\pi_*R^jf_*(\omega _X\otimes F\otimes f^*M\otimes 
\mathscr J(hh'))=0 
\end{equation*}
holds for every $i>0$ and $j$. 
\end{proof}


\end{document}